\documentclass[reqno]{amsart}

\usepackage[numbers]{natbib}
\usepackage{amssymb, algorithm, caption, tikz, tikz-cd, zref-abspage, zref-user, tikzpagenodes, etoolbox, wrapfig, hyperref, array, enumitem, mathrsfs, color, colortbl, graphicx}
\usepackage[noend]{algpseudocode}
\usepackage[cal=boondox]{mathalfa}
\usepackage[abs]{overpic}

\captionsetup[algorithm]{labelsep=period}
\usetikzlibrary{matrix,arrows,patterns,calc}
\hypersetup{colorlinks,linkcolor={blue},citecolor={blue},urlcolor={blue}}
\graphicspath{ {images/} }
\pagestyle{plain}
\setlength\arraycolsep{4pt}
\algrenewcommand\algorithmicindent{1.3em}

\definecolor{Gray}{gray}{0.9}
\definecolor{daisy}{RGB}{230,175,46}
\definecolor{pacific}{RGB}{69,78,158}

\newcommand{\Longtop}[2][.424\linewidth]{%
  \leavevmode\hfill\makebox[#1][l]{\begin{minipage}[t]{0.024\linewidth}$\triangleright$\end{minipage}\begin{minipage}[t]{0.4\linewidth}\setlength\parfillskip{0pt}#2\end{minipage}}}
\newcommand{\Top}[2][.424\linewidth]{%
  \leavevmode\hfill\makebox[#1][l]{\begin{minipage}[t]{0.024\linewidth}$\triangleright$\end{minipage}\begin{minipage}[t]{0.4\linewidth}#2\end{minipage}}}
\newcommand{\Bottom}[2][.424\linewidth]{%
  \leavevmode\hfill\makebox[#1][l]{\hspace{0.024\linewidth}#2}}
\algnewcommand\RETURN{\State \textbf{return} }

\makeatletter
\newcommand*\ALG@lastblockb{b}
\newcommand*\ALG@lastblocke{e}
\apptocmd{\ALG@beginblock}{%
    %\typeout{beginning block, nesting level \theALG@nested, line \arabic{ALG@line}}%
    \ifx\ALG@lastblock\ALG@lastblockb
        \ifnum\theALG@nested>1\relax\expandafter\@firstoftwo\else\expandafter\@secondoftwo\fi{\ALG@tikzborder}{}%
    \fi
    \let\ALG@lastblock\ALG@lastblockb%
}{}{\errmessage{failed to patch}}

\pretocmd{\ALG@endblock}{%
    %\typeout{ending block, nesting level \theALG@nested, line \arabic{ALG@line}}%
    \ifx\ALG@lastblock\ALG@lastblocke
        \addtocounter{ALG@nested}{1}%
        \addtolength\ALG@tlm{\csname ALG@ind@\theALG@nested\endcsname}%
        \ifnum\theALG@nested>1\relax\expandafter\@firstoftwo\else\expandafter\@secondoftwo\fi{\endALG@tikzborder}{}%
        \addtolength\ALG@tlm{-\csname ALG@ind@\theALG@nested\endcsname}%
        \addtocounter{ALG@nested}{-1}%
    \fi
    \let\ALG@lastblock\ALG@lastblocke%
}{}{\errmessage{failed to patch}}

\tikzset{ALG@tikzborder/.style={line width=0.5pt,black}}

\newcommand*\currenttextarea{current page text area}

\newcommand*{\updatecurrenttextarea}{%
    \if@twocolumn
        \if@firstcolumn
            \renewcommand*{\currenttextarea}{current page column 1 area}%
        \else
            \renewcommand*{\currenttextarea}{current page column 2 area}%
        \fi
    \else
        \renewcommand*\currenttextarea{current page text area}%
    \fi
}

\newcounter{ALG@tikzborder}
\newcounter{ALG@totaltikzborder}
\newenvironment{ALG@tikzborder}[1][]{%
    % Allow user to overwrite the used style locally
    \ifx&#1&\else
        \tikzset{ALG@tikzborder/.style={#1}}%
    \fi
    \stepcounter{ALG@totaltikzborder}%
    \expandafter\edef\csname ALG@ind@border@\theALG@nested\endcsname{\theALG@totaltikzborder}%
    \setcounter{ALG@tikzborder}{\csname ALG@ind@border@\theALG@nested\endcsname}%
    %\typeout{begin ALG border nesting level=\theALG@nested, tikzborder=\theALG@tikzborder, tlm=\the\ALG@tlm}%
    \tikz[overlay,remember picture] \coordinate (ALG@tikzborder-\theALG@tikzborder);% node {\theALG@tikzborder};% Modified \tikzmark macro
    \zlabel{ALG@tikzborder-begin-\theALG@tikzborder}%
    % Test if end-label is at the same page and draw first half of border if not, from start place to the end of the page
    \ifnum\zref@extract{ALG@tikzborder-begin-\theALG@tikzborder}{abspage}=\zref@extract{ALG@tikzborder-end-\theALG@tikzborder}{abspage} \else
        \updatecurrenttextarea
        \ALG@drawvline{[shift={(0pt,.5\ht\strutbox)}]ALG@tikzborder-\theALG@tikzborder}{\currenttextarea.south east}{\ALG@thistlm}%
        % If it spreads over more than two pages:
        \newcounter{ALG@tikzborderpages\theALG@tikzborder}%
        \setcounter{ALG@tikzborderpages\theALG@tikzborder}{\numexpr-\zref@extract{ALG@tikzborder-begin-\theALG@tikzborder}{abspage}+\zref@extract{ALG@tikzborder-end-\theALG@tikzborder}{abspage}}%
        \ifnum\value{ALG@tikzborderpages\theALG@tikzborder}>1
            \edef\nextcmd{\noexpand\AtBeginShipoutNext{\noexpand\ALG@tikzborderpage{\theALG@tikzborder}{\the\ALG@thistlm}}}%some pages need a border on the whole page
            \nextcmd
        \fi
    \fi
}{%
    \setcounter{ALG@tikzborder}{\csname ALG@ind@border@\theALG@nested\endcsname}%
    %\typeout{end ALG border nesting level=\theALG@nested, tikzborder=\theALG@tikzborder, tlm=\the\ALG@tlm}%
    \tikz[overlay,remember picture] \coordinate (ALG@tikzborder-end-\theALG@tikzborder);% node {\theALG@tikzborder};% Modified \tikzmark macro
    \zlabel{ALG@tikzborder-end-\theALG@tikzborder}%
    % Test if begin-label is at the same page and draw whole border if so, from start place to end place
    \updatecurrenttextarea
    \ifnum\zref@extract{ALG@tikzborder-begin-\theALG@tikzborder}{abspage}=\zref@extract{ALG@tikzborder-end-\theALG@tikzborder}{abspage}\relax
        \ALG@drawvline{[shift={(0pt,.5\ht\strutbox)}]ALG@tikzborder-\theALG@tikzborder}{ALG@tikzborder-end-\theALG@tikzborder}{\ALG@thistlm}%
    % Otherwise draw second half of border, from the top of the page to the end place
    \else
        %\settextarea
        \ALG@drawvline{\currenttextarea.north west}{ALG@tikzborder-end-\theALG@tikzborder}{\ALG@thistlm}%
    \fi
}

\newcommand*{\ALG@drawvline}[3]{%#1=from, #2=to, #3=value of \ALG@tlm/\ALG@thisthm
    \begin{tikzpicture}[overlay,remember picture]
        \draw [ALG@tikzborder]
            let \p0 = (\currenttextarea.north west), \p1=(#1), \p2 = (#2)
             in
            (#3+\fboxsep+.5\pgflinewidth+\x0+1.52em-\algorithmicindent,\y1+\fboxsep+.5\pgflinewidth)%-\fboxsep-.5\pgflinewidth
             --
            (#3+\fboxsep+.5\pgflinewidth+\x0+1.52em-\algorithmicindent,\y2-\fboxsep-.5\pgflinewidth)
            %node[midway,anchor=east] {\ALG@tikzbordertext}
        ;
    \end{tikzpicture}%
}

\newcommand{\ALG@tikzborderpage}[2]{%the whole page gets a border, #1=value of \theALG@tikzborder, #2=value of \ALG@tlm/\ALG@thistlm
    \updatecurrenttextarea
    \setcounter{ALG@tikzborder}{#1}%
    \ALG@drawvline{\currenttextarea.north west}{\currenttextarea.south east}{#2}%
    \addtocounter{ALG@tikzborderpages\theALG@tikzborder}{-1}%
    \ifnum\value{ALG@tikzborderpages\theALG@tikzborder}>1
        \AtBeginShipoutNext{\ALG@tikzborderpage{#1}{#2}}%
    \fi
    \vspace{-0.5\baselineskip}% Compensate for the generated extra space at begin of the page. No idea why exactly this happens.
}

\def\ALG@tikzbordertext{\the\ALG@tlm}
\makeatother
% end vertical rule patch for algorithmicx

% continuation indent patch, slightly extended from https://tex.stackexchange.com/questions/78776/forced-indentation-in-algorithmicx to support multiple paragraphs in one block
\makeatletter
\newlength{\ALG@continueindent}
\setlength{\ALG@continueindent}{2em}
\newcommand*{\ALG@customparshape}{\parshape 2 \leftmargin \linewidth \dimexpr\ALG@tlm+\ALG@continueindent\relax \dimexpr\linewidth+\leftmargin-\ALG@tlm-\ALG@continueindent\relax}
\newcommand*{\ALG@customparshapex}{\parshape 1 \dimexpr\ALG@tlm+\ALG@continueindent\relax \dimexpr\linewidth+\leftmargin-\ALG@tlm-\ALG@continueindent\relax}
\apptocmd{\ALG@beginblock}{\ALG@customparshape\everypar{\ALG@customparshapex}}{}{\errmessage{failed to patch}}
\makeatother
% end continuation indent patch

\newtheorem{theorem}{Theorem}[section]
\newtheorem{lemma}[theorem]{Lemma}
\newtheorem{proposition}[theorem]{Proposition}
\newtheorem{corollary}[theorem]{Corollary}

\newenvironment{customthm}[1]
  {\innercustomthm}
  {\endinnercustomthm}

\newenvironment{customlem}[1]
  {\innercustomlem}
  {\endinnercustomlem}

\theoremstyle{definition}
\newtheorem{definition}[theorem]{Definition}
\newtheorem{notation}[theorem]{Notation}

\numberwithin{equation}{section}

\newcommand{\bR}{\mathbb{R}}
\newcommand{\bQ}{\mathbb{Q}}
\newcommand{\bZ}{\mathbb{Z}}
\newcommand{\bN}{\mathbb{N}}
\newcommand{\bC}{\mathbb{C}}
\newcommand{\cO}{\mathcal{O}}
\newcommand{\fp}{\mathfrak{p}}
\newcommand{\fb}{\mathfrak{b}}
\newcommand{\eps}{\varepsilon}
\newcommand{\minus}{\raisebox{0.02cm}{-}\hspace{-0.01cm}}

\AtBeginDocument{
\addtolength{\abovedisplayskip}{0.8pt}
\addtolength{\abovedisplayshortskip}{0.8pt}
\addtolength{\belowdisplayskip}{0.8pt}
\addtolength{\belowdisplayshortskip}{0.8pt}}

\setlength\bibsep{6pt}
\setlength\belowcaptionskip{-0.5\baselineskip}
\setlength\abovecaptionskip{0.4\baselineskip}
\begin{document}

\title[Continued Fractions]{Continued Fractions Over Non-Euclidean Imaginary Quadratic Rings}

\thanks{}

\author{Daniel E. Martin}
\address{Department of Mathematics, Mathematical Sciences Building, One Shields Avenue, University of California, Davis, CA, 95616}
\email{dmartin@math.ucdavis.edu}

\subjclass[2010]{Primary: 11A55, 11J17, 11J70, 11Y65. Secondary: 11A05, 11R11, 11Y16, 11Y40, 40A15, 52C05.}

\keywords{continued fractions, Euclidean, Diophantine approximation, imaginary quadratic, nearest integer algorithm}

\thanks{This research was supported by NSF-CAREER CNS-1652238 under the supervision of PI Dr. Katherine E. Stange. The author is grateful to Kate Stange for so much guidance and help. The author also thanks the anonymous referee for corrections and helpful suggestions.}

\date{\today}

\begin{abstract}We propose and study a generalized continued fraction algorithm that can be executed in an arbitrary imaginary quadratic field, the novelty being a non-restriction to the five Euclidean cases. Many hallmark properties of classical continued fractions are shown to be retained, including exponential convergence, best-of-the-second-kind approximation quality (up to a constant), periodicity of quadratic irrational expansions, and polynomial time complexity.\end{abstract}

\maketitle

\section{Introduction}\label{sec:1}

Complex continued fractions were introduced by A. Hurwitz in 1887 \cite{hurwitz}, when he applied the nearest integer algorithm to $\bZ[i]$. His algorithm takes as input some $z = z_0\in\bC$ to be approximated. The $n^{\text{th}}$ coefficient, $a_n$, is then the nearest (Gaussian) integer to $z_{n-1}$. We stop if $a_n=z_{n-1}$, and continue with $z_n=1/(z_{n-1}-a_n)$ otherwise. The resulting approximations, called convergents, take the form $$\frac{p_n}{q_n} = a_1 +\cfrac{1}{a_2+\cfrac{1}{\raisebox{6pt}{$\ddots$} \;a_{n-1}+ \cfrac{1}{a_n}}}.$$ 

Hurwitz showed that many properties possessed by this algorithm over $\bZ$ still hold over $\bZ[i]$. For example, $|q_nz-p_n|$ decreases monotonically and exponentially, the continuants, denoted $q_n$ above, increase in magnitude monotonically and exponentially, and quadratic irrationals have periodic expansions. 

A key ingredient in his proofs is that $|z_{n-1}-a_n|$ is bounded by a constant less than $1$, namely $1/\sqrt{2}$. Such a constant exists precisely because open unit discs centered on lattice points of $\bZ[i]$ cover the complex plane. The same is true of the imaginary quadratic rings of discriminant $\Delta=-3$, $-7$, $-8$, and $-11$, but no others. This explains why the application and study of continued fractions over imaginary quadratic fields has been restricted to these five cases---the Euclidean ones. 

A large collection of references for Hurwitz' algorithm can be found in \cite{oswald} or \cite{robert2}. Also see \cite{lakein}, where Lakein investigates approximation quality of Hurwitz convergents in each Euclidean ring. See \cite{dani} for a similar algorithm removed from the ring setting, though still with a Euclidean-like requirement. See \cite{schmidt,schmidt2,schmidt3,schmidt11} for Schmidt's algorithm, which also only functions over the five Euclidean rings. Another approximation algorithm is given by Whitley in \cite{whitley}. It has continued fraction-like properties, while being executable in the four non-Euclidean, imaginary quadratic principal ideal domains, $\Delta=-19$, $-43$, $-67$, and $-163$. Whitley's idea was generalized to rings of class number $2$ by Bygott \cite{bygott}, and as he observes, it may be further adaptable to rings with trivial principal genus (the square of every ideal is principal).

Our purpose is to apply an algorithm with similar structure to that of Hurwitz in an arbitrary imaginary quadratic field.

\begin{notation}\label{not:field}Let $K$ be an imaginary quadratic field with ring of integers $\cO$ and discriminant $\Delta$.\end{notation} 

Our algorithm, Algorithm \ref{alg:1}, is presented in Subsection \ref{ss:alg} followed by an example execution when $\Delta=-23$. It does not build on the algorithm of Whitley and Bygott---the only setting in which the two coincide is a Euclidean ring, in which case both simply reduce to Hurwitz' algorithm.

Let us roughly summarize our way around the non-Euclidean obstacle. When there is no choice of coefficient $a_n\in\cO$ satisfying $|z_{n-1}-a_n|<1$, Algorithm \ref{alg:1} seeks $a_n$ near $b_nz_{n-1}$ instead, where $b_n$ comes from a fixed finite set $B\subset\cO\backslash\{0\}$. But the exact criteria for selecting $a_n$ and $b_n$ change according to the previous stage's choice of coefficient. We impose an analogue of the classical analytic restraint: \begin{equation}|b_nz_{n-1}-a_n|<|b_{n-1}|,\label{eq:100}\end{equation} and a new algebraic one: \begin{equation}b_{n-1}\,\Big|\,a_np_{n-1}+b_np_{n-2},\, a_nq_{n-1}+b_nq_{n-2}.\label{eq:101}\end{equation} The integer quotients from (\ref{eq:101}) are $p_n$ and $q_n$, and the algorithm continues with $z_n = b_{n-1}/(b_nz_{n-1}-a_{n-1})$. Remark that because $b_n$ need not equal $1$, our convergents are called \textit{generalized} continued fractions. (Some recent applications of generalized continued fractions over $\bZ$ can be found in \cite{anselm} and \cite{burger}.)

Among pairs $a_n\in\cO$ and $b_n\in B$ satisfying (\ref{eq:101}), at least one is guaranteed to satisfy (\ref{eq:100}) if open discs of center $a_n/b_n$ and radius $|b_{n-1}/b_n|$ cover $\bC$. If such a covering occurs for every $n$, we say $B$ is \textit{admissible} (defined more precisely in Definition \ref{def:admit}). The Hurwitz algorithm has a similar requirement: Euclideanity, which is equivalent to unit discs on integers covering $\bC$. These are the five rings for which $B=\{1\}$ is admissible.

For a given field there are many admissible sets, and each may give different continued fraction expansions of some input $z$. Even after fixing an admissible set, an input can have many possible continued fraction expansions because $z_{n-1}$ might lie in the overlap of multiple discs of center $a_n/b_n$ and radius $|b_{n-1}/b_n|$. Hurwitz deals with this situation by insisting that $a_n$ be nearest to $z_{n-1}$ (and $b_n=1$ always). Initially we make no such requirement to emphasize that the results of Section \ref{sec:3}, like the four following theorems, are valid independently of this choice. A method for selecting among many acceptable coefficients (Algorithm \ref{alg:2}) is not proposed until Section \ref{sec:4}. 

The first three results below are versions of the more general Theorems \ref{thm:main}, \ref{thm:best}, and \ref{thm:growq}, where constants (meaning with respect to $n$ and $z$) depend on $B$. For simplicity we have used $B=\big\{1,2,...,\big\lfloor\sqrt{|\Delta|}\big\rfloor\big\}$, which Theorem \ref{thm:ints} proves admissible, to get the following constants that depend only on $\Delta$.

\begin{theorem}\label{thm:1}If $n\geq 1$ then $|q_nz-p_n|$ is less than
$$i)\;\; \frac{2\sqrt{|\Delta|}}{|z_nq_n|},\hspace{1cm}ii)\;\;\frac{2\sqrt{|\Delta|}}{|q_{n+1}|},\hspace{0.5cm}\text{and}\hspace{0.5cm}iii)\;\;\frac{3|\Delta|}{|a_{n+1}q_n|}.$$\end{theorem}

\begin{theorem}\label{thm:2}If $p/q$ is not a convergent of $z$ for some $p,q\in\cO$ with $q\neq 0$, then $$|q_n(q_nz-p_n)|<16|\Delta q(qz-p)|$$ for any $n\geq 1$. That is, each $p_n/q_n$ is a best approximation of the second kind up to constants: if $rs\leq \sqrt{2}/16|\Delta|$, then $0<|q|<r|q_n|$ implies $|qz-p|>s|q_nz-p_n|$ for any $p\in\cO$ except perhaps when $p/q$ is already a convergent.\end{theorem}

\begin{theorem}\label{thm:3}If $\,0\leq n'<n$, then $16|\Delta q_n|> \sqrt{2}^{n-n'}|q_{n'}z_{n'}|$. In particular, if $n\geq 1$ then $16|\Delta q_n|>\sqrt{2}^n$.\end{theorem}

\begin{theorem}\label{thm:introq}There is a continued fraction expansion of $z$ in which the sequence of pairs $(a_n,b_n)_n$ is eventually periodic and infinite if and only if $[K(z):K]=2$.\end{theorem}

Note that the last statement refers to \textit{an} expansion rather than \textit{the} expansion due to the potential choice among coefficients that arises in the overlapping disc scenario. Figure \ref{fig:5} gives an example of how some expansions of a quadratic, irrational input can be periodic while others are not. A path in the right-side image can be periodic or aperiodic, depending on the choices made at those nodes which are the source of two arrows. Such a node corresponds to ``$z_{n-1}$" in the left-side image, which lies in the overlap of two discs, one for each arrow. More detail is given in Subsection \ref{ss:coef}.

Other results include the monotonic decrease of $|q_nz-p_n|$ (Proposition \ref{prop:mono}), an upper bound on $|qz-p|$ that implies $p/q$ appears as a convergent (Lemma \ref{lem:combo}), and equating bad approximability of $z$ to boundedness of $a_n/b_n$ (Corollary \ref{cor:bad}).

Variations of the properties above may hold for the algorithm of Whitley and Bygott in fields of class number $1$ or $2$, but approximation quality is not addressed in their work. Their goal was to compute spaces of cusp forms.

Section \ref{sec:4} shows that Algorithm \ref{alg:1} can be executed in any imaginary quadratic field by explicitly producing admissible sets in Theorem \ref{thm:ints}. The sets we give have two advantages over a generic one. The first is efficiency---the admissibilty requirement on $B$ guarantees coefficients exists, but not an easy way to find them. With $B$ as in Theorem \ref{thm:ints}, there is a subroutine for finding coefficients, Algorithm \ref{alg:2}, which gives Algorithm \ref{alg:1} polynomial complexity (Theorem \ref{thm:timebest}).

The second advantage to using $B$ from Theorem \ref{thm:ints} is control over $(p_n,q_n)$. In Euclidean rings with Hurwitz' algorithm or principal ideal domains with Whitley's, $(p_n,q_n)=\cO$. With Bygott's generalization to rings of class number $2$, all divisors of $(p_n,q_n)$ are ramified after appropriate scaling. A generic admissible set for Algorithm \ref{alg:1} loses such control, and thus potential applications like Whitley and Bygott's to the group $\text{PSL}_2(\cO)$. This can be partially remedied:

\begin{theorem}\label{thm:introcp}If $a_{n-1}$, $b_{n-1}$, $a_n$, and $b_n$ are found using Algorithm \ref{alg:2}, then only ramified, non-rational primes divide $(p_{n-1},q_{n-1},p_n,q_n)$.\end{theorem}

Admissible sets can also be precomputed for a particular ring. A brief explanation of how to do this is given in Subsection \ref{ss:precomp}. Sample output from the precomputation algorithm described is in Table \ref{table:2} for $|\Delta|<50$.

Some resources are available at \href{https://www.math.ucdavis.edu/~dmartin}{\url{math.ucdavis.edu/~dmartin}}, including the tool that created the images herein and C\texttt{++} source code for Algorithm \ref{alg:1} and for finding admissible sets. There is also software to create \textit{Schmidt arrangements} (coined and first studied by Stange \cite{stange}), fractal displays of circles in the complex plane obtained as the orbit of the real line under $\text{PSL}_2(\cO)$. It turns out that approximating $z\in\bC$ with Algorithm \ref{alg:1} corresponds to a ``walk" along circles in a Schmidt arrangement toward $z$. The convergents are exactly the points of intersection between successive circles in this walk. Details can be found in the author's dissertation \cite{martin}. Continued fractions are addressed on their own here for simplicity.

\section{A Continued Fraction Algorithm}\label{sec:2}

\subsection{Intuition for non-Euclidean rings}\label{ss:intuit}Hurwitz' algorithm can be applied in any imaginary quadratic ring, but with varying degrees of success. In this subsection we explore what happens if $\cO$ is not Euclidean through an example in $\bQ(\sqrt{-23})$. Recall notation from the first page, and let $M_0$ denote the identity matrix.

We will need the usual recursion relation $M_n = M_{n-1}S(a_n)$, where \begin{equation}M_n = \begin{bmatrix}p_n & p_{n-1} \\ q_n & q_{n-1}\end{bmatrix}\hspace{0.5cm}\text{and}\hspace{0.5cm}S(a) = \begin{bmatrix}a & 1 \\ 1 & 0\end{bmatrix}.\label{eq:2}\end{equation} With $z_n = 1/(z_{n-1}-a_n)$, it follows by induction that $z_n$ can be computed by applying the M\"{o}bius transformation associated with $M_n^{-1}$ to $z$. That is, \begin{equation}z_n = \frac{q_{n-1}z-p_{n-1}}{p_n-q_nz}.\label{eq:3}\end{equation} 

\begin{wrapfigure}{r}{0.41\textwidth}
  \centering
    \vspace{-0.3cm}
    \begin{overpic}[unit=1mm,trim=2cm 0cm 2cm 0cm,clip,height=5.8cm]{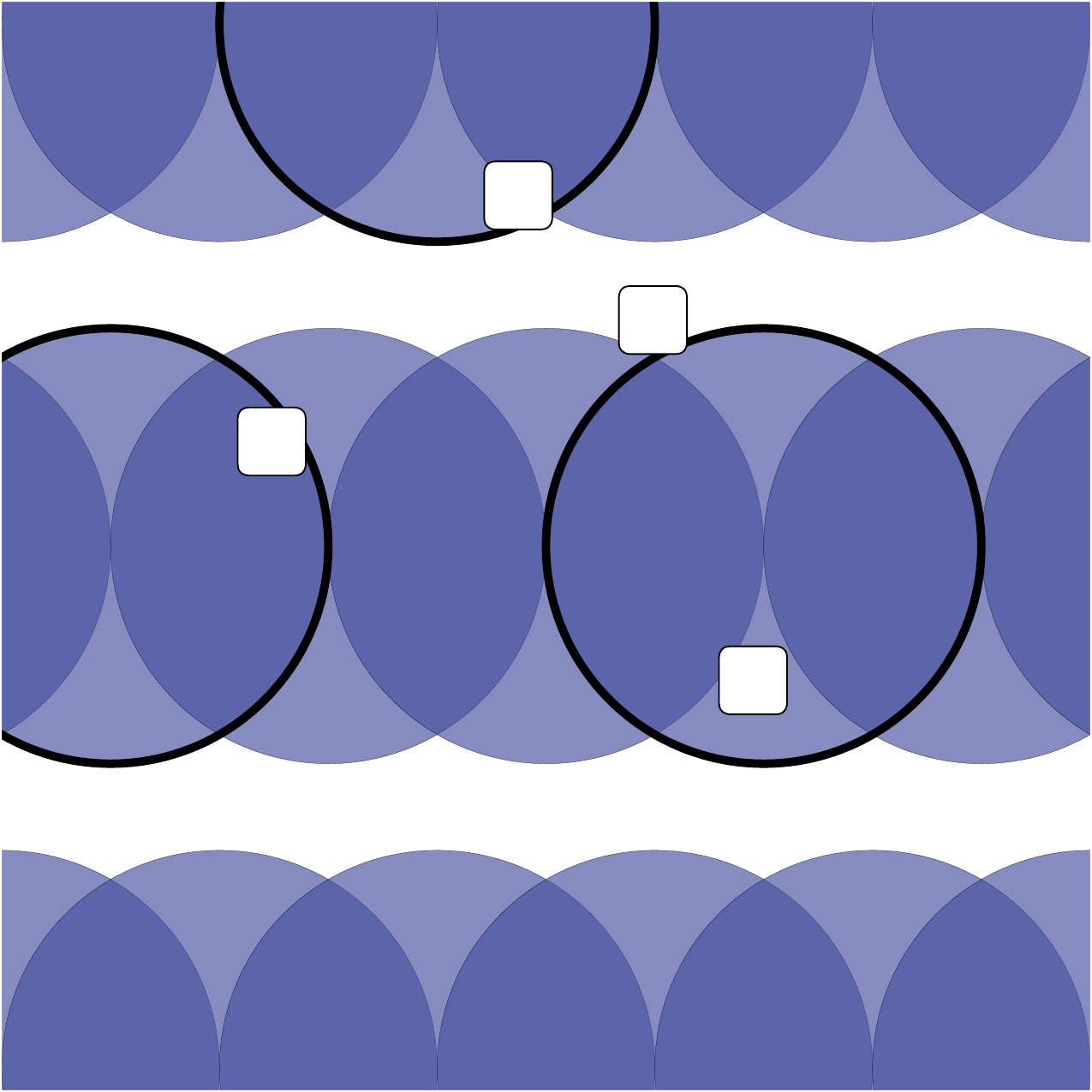}
    \put(8.3,33.5){\small$0$}
    \put(33.89,20.8){\small$1$}
    \put(21.35,46.55){\small$2$}
    \put(28.5,39.9){\small$3$}
    \end{overpic}
    \captionsetup{width=0.375\textwidth}
    \setlength\belowcaptionskip{-2\baselineskip}
  \caption{Unit discs around $z_0$, $z_1$, and $z_2$ with $|\Delta|=23$.}\label{fig:1}
\end{wrapfigure}

\noindent Thus an improvement in approximation quality, $|q_nz-p_n|<|q_{n-1}z-p_{n-1}|$, is equivalent to $1/|z_n|=|z_{n-1}-a_n| < 1$. So in a non-Euclidean ring, it is still desirable (and necessary, as we show shortly) that $z_{n-1}$ lie in the open unit disc centered on $a_n$.

Let us input $z=-1.26+0.48i$, labeled ``0" in Figure \ref{fig:1}, and take coefficients from the integers in $\bQ(\sqrt{-23})$. There are two choices for $a_1\in\cO$ whose unit discs contain $z_0$: $-1$ and $-2$. If $a_1 = -2$, for example, then

\vspace{0.15cm}

$\hfill\displaystyle z_1=\frac{1}{z_0-a_1}\approx 0.95-0.62i.\hfill$

\vspace{0.15cm}

\noindent Similarly, $a_2=1$ and $a_3 = (-1+\sqrt{-23})/2$ center the bold outlined unit discs that contain $z_1$ and $z_2\approx -0.13+1.61i$. But there is no such disc containing $z_3\approx 0.49+1.04i$. As a result, any choice of $a_4$ worsens approximation quality: $|q_4z-p_4|>|q_3z-p_3|$. 

We can persevere, perhaps searching for a clever combination $a_4,a_5,...,a_n$ to finally achieve $|q_nz-p_n|<|q_3z-p_3|$. Or at the very least, there may be a sequence of coefficients that makes $\lim_n p_n/q_n=z$. It happens that neither is possible. The obstruction is that $M_n$, up to a swapping of columns which we henceforth ignore, belongs to the elementary group in $\text{SL}_2(\cO)$---the group generated by $S(a)$ from (\ref{eq:2}) for $a\in\cO$. It is proved in \cite{martin2} that if $p$ and $q$ are the column entries of a matrix in the elementary group, then $p/q$ lies in the interior of a unit disc centered on an integer. Thus for any choices of $a_4,...,a_n$, the distance from $z_3$ to the column ratios of $M_3^{-1}M_n$, which belongs to the elementary group, is bounded from below by a positive constant. So the same is true of the distance between $z=M_3(z_3)$ and the column ratios and $M_n$. This is to say that no sequence of coefficients achieves $\lim_n p_n/q_n=z$.

A fix proposed by Whitley in \cite{whitley} is to permit right multiplication by certain additional matrices from $\text{SL}_2(\cO)$. So $M_n = M_{n-1}S$, where $S$ need not take the form $S(a)$. Generally, $|q_nz-p_n|<|q_{n-1}z-p_{n-1}|$ is equivalent to $|z-S_{1,1}/S_{2,1}| < 1/|S_{2,1}|$, thereby associating an open disc to $S$ which is no longer centered on an integer if $S_{2,1}$ is not a unit. Success occurs when we can choose matrices so that such discs cover $\bC$. This is possible exactly when $\cO$ is one of the eight principal ideal domains. In a non-principal ideal domain, there is a discrete set of problematic points. The so-called singular points are not covered by open discs with center $S_{1,1}/S_{2,1}$ and radius $1/|S_{2,1}|$ for $S\in\text{SL}_2(\cO)$ \cite{swan}. The approximation quality of Whitley's algorithm suffers when $(z_n)_n$ approaches a singular point.

Bygott goes a step further \cite{bygott} and allows $S$ from the extended Bianchi group (see Section 7.4 of \cite{grunewald} for a definition and basic properties). Only singular points $p/q$ for which $(p,q)^2$ is nonprincipal are left uncovered by the newly introduced open discs. Bygott works in fields of class number $2$ because no such points exist. 

To lengthen the list of imaginary quadratic fields that possess an approximation algorithm, we have gone from the elementary group to $\text{SL}_2(\cO)$ to the extended Bianchi group. There are no more extensions to attempt. The latter is maximal among discrete groups of M\"{o}bius transformations containing $\text{SL}_2(\cO)$ \cite{grunewald}. The group structure must be abandoned to obtain a covering of $\bC$ by open discs in fields with non-2-torsion ideal classes like $\bQ(\sqrt{-23})$. So let us return to the elementary group and consider the following modification to $S(a)$.

\begin{notation}\label{not:matrix}For $a,b\in\bC$ let $$S(a,b)=\begin{bmatrix}a & 1 \\ b & 0\end{bmatrix}.$$\end{notation}

It is well-known that open discs of radius $1/|b|$ and center $a/b$ cover $\bC$ for $a\in \cO$ and $b$ from some finite set $B\subset\cO\backslash\{0\}$. For example, $B=\{1,2\}$ works for $\bQ(\sqrt{-23})$, introducing discs of radius $1/2$ centered on half-integers. The resulting covering is the first image in Figure \ref{fig:3}. As shown in the second image, the closures of these discs still cover the plane after scaling radii by $\sqrt{8/9}$. Returning to our example, the first image shows $|z_3 - (1+\sqrt{-23})/4| < 1/2$. So $M_4=M_3S((1+\sqrt{-23})/2,2)$ gives $|q_4z-p_4|< |q_3z-p_3|$ as desired.

\begin{figure}
\centering
\begin{overpic}[unit=1mm,trim=2cm 0cm 2cm 0cm,clip,height=5.8cm]{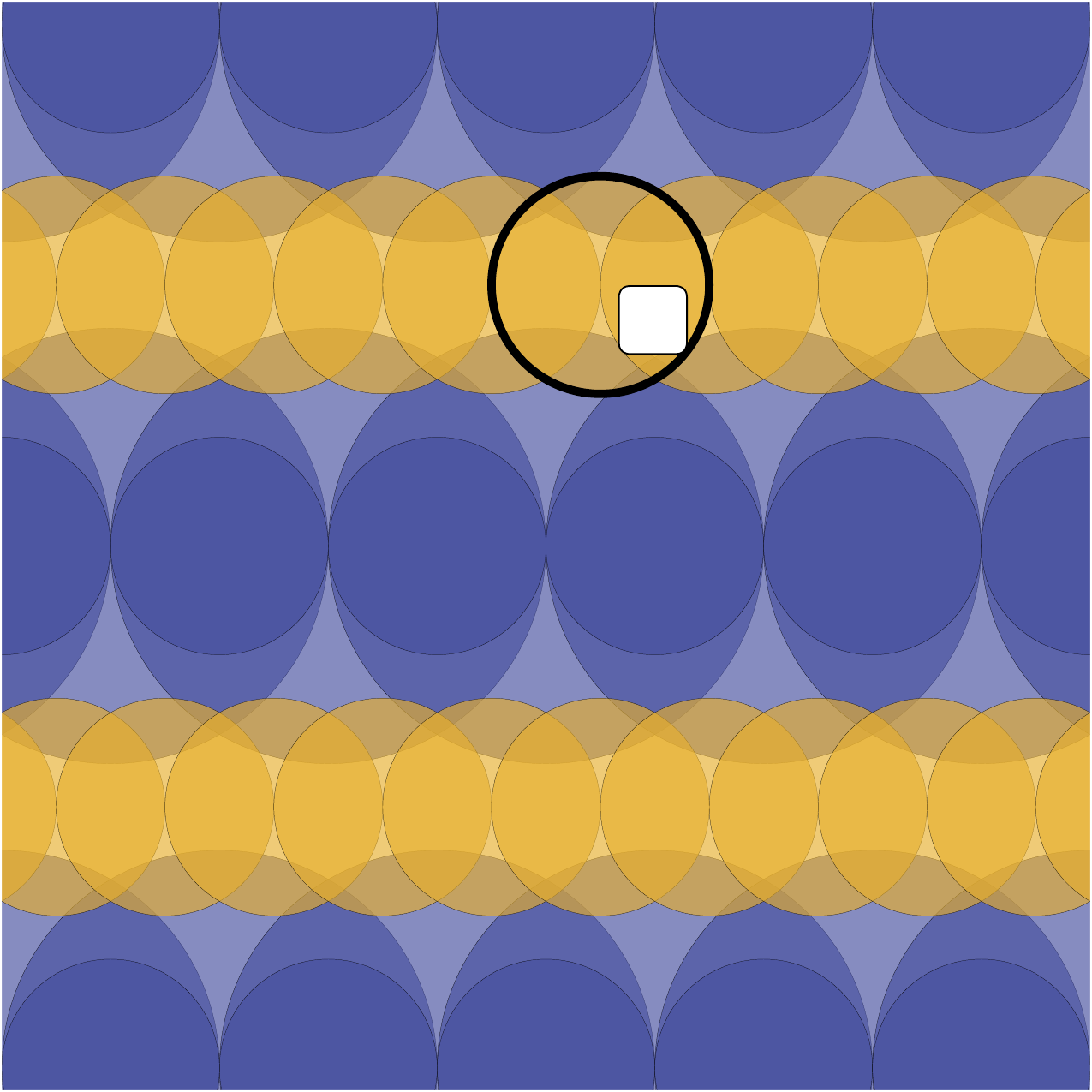}
    \put(28.5,39.9){\small$3$}
\end{overpic}\hspace{0.5cm}
\includegraphics[trim=2cm 0cm 2cm 0cm,clip,height=5.8cm]{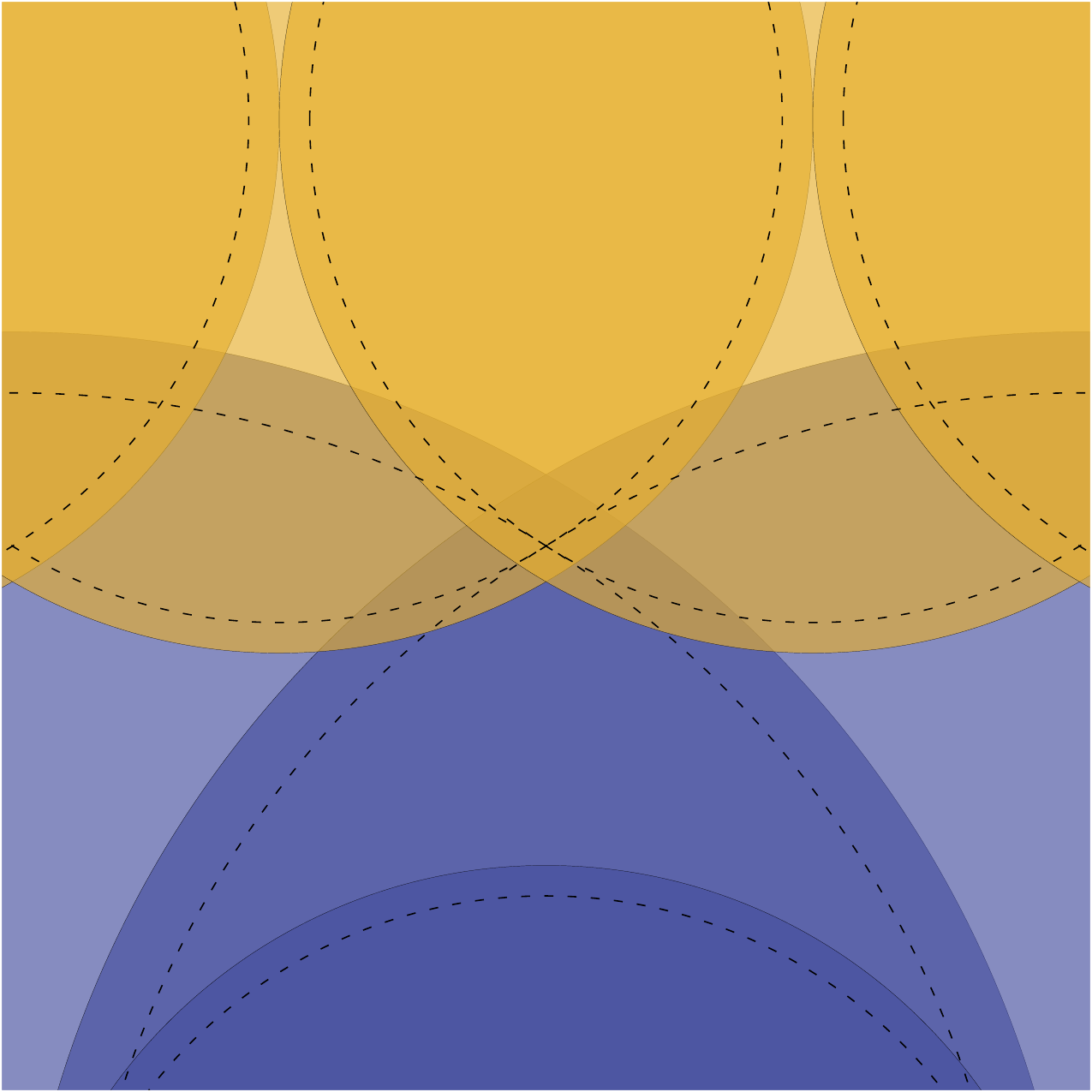}
\caption{Left:  discs of radius $1$ and $1/2$ on half-integers with $|\Delta|=23$; now $z_3$ is covered. Right: scaling of radii by $\sqrt{8/9}$.}\label{fig:3}
\end{figure}

Unfortunately, continuing in this fashion does not really work. Convergents converge to $z$, but they may not come close in quality to the approximations that must exist by Dirichlet's box principal. The missing piece is a bound on $|\det M_n|$, which can grow exponentially when $|b|\neq 1$ in $S(a,b)$. So we make an adjustment: since $\det M_4=b_4=2$, in the next stage we pick among matrices of the form $S(a/2,b/2)$, where $a\in \cO$ and $b\in\{1,2\}$. This cancels the previous determinant, and $|\det M_5|=b_5\in\{1,2\}$ again. Since the goal is to approximate $z$ with ratios of integers, $a_5$ and $b_5$ are now subject to the restriction that $M_4S(a_5/2,b_5/2)$ be integral. Matrix multiplication shows this condition is equivalent to (\ref{eq:101}). Our new divisibility requirement eliminates half of the discs in Figure \ref{fig:3}. But the ones that survive now get a disc of radius $2/b$ instead of $1/b$, as in (\ref{eq:100}). We need this to remain a covering to guarantee containment of $z_4$. It does, as can be seen in the first image of Figure \ref{fig:4}. That we continue to obtain a covering using $B=\{1,2\}$ in subsequent stages of the algorithm makes this set admissible for $\bQ(\sqrt{-23})$.

It is not uncommon that a set $B$ produces a covering at one stage (like Figure \ref{fig:3} for the fourth stage in the example) but not another. A few examples of such inadmissible sets are $\{1,(1+\sqrt{-15})/2\}$ for $\Delta=-15$, $\{1,2\}$ for $\Delta=-31$, $-39$, or $-47$, and $\{1,2,(1\pm\sqrt{-35})/2\}$ for $\Delta=-35$.

There is one subtlety regarding coefficient choice that occurs if $B\neq \{1\}$. In our example from $\bQ(\sqrt{-23})$, note that if $a_n$ and $b_n=1$ make $M_{n-1}S(a_n/b_{n-1},1/b_{n-1})$ integral and $|z_{n-1}-a_n|<1/2$, then we might instead choose $2a_n$ and $b_n=2$. Indeed, $M_{n-1}S(2a_n/b_{n-1},2/b_{n-1})$ is integral and $|2z_{n-1}-2a_n|<1$. This doubles the resulting values of $p_n$ and $q_n$, presenting a potential problem: the undoubled values may appear at a later index, meaning the same convergent could occur twice. This would necessitate unpleasant caveats in several of Section \ref{sec:3}'s results. As such, we insist that $(p_n,q_n)$ be reduced to the extent that avoids this issue.

\begin{definition}\label{def:reduce}For $\eps\in(0,1)$, an ideal $\fb\subseteq\cO$ is $\eps$-\emph{reduced} if for every $k\in K\backslash\{0\}$, $k\fb\subseteq\cO$ implies $|k| > \eps^2$.\end{definition}

The relation between $\eps$ in Definition \ref{def:reduce} and $B$ is clarified shortly.

\subsection{The algorithm}\label{ss:alg}Definition \ref{def:admit} formalizes the covering requirement discussed in the previous subsection. For computations, this definition can be skipped in favor of Table \ref{table:2} or Theorem \ref{thm:ints}.

\begin{notation}\label{not:disc}Let $D(z,r)$ denote the closed disc of radius $r>0$ and center $z\in\bC$.\end{notation}

\begin{definition}\label{def:admit}A nonempty, finite set $B\subset\cO\backslash\{0\}$ is \emph{admissible} with $\eps\in(0,1)$ if for every $\eps$-reduced ideal $\fb$ with $\fb\cap B\neq \emptyset$, $$\bC=\bigcup_{a,b}D\!\left(\frac{a}{b},\frac{\eps}{|b|}\right),$$ where the union ranges over the pairs $a,b$ for $a\in K$ and $b\in B$ that make $(a\fb,b\fb^{-1})$ integral and $\eps$-reduced. (Here $\fb^{-1}$ is the fractional ideal satisfying $\fb^{-1}\fb=\cO$.)\end{definition}

The value of $\eps$ in Definition \ref{def:admit} is a guaranteed measure of approximation quality improvement, $|q_nz-p_n|\leq\eps|q_{n-1}z-p_{n-1}|$. Geometrically, it is an allowable amount by which radii of discs can be scaled while preserving the covering, as shown in Figure \ref{fig:3}. 

Note that Definition \ref{def:admit} does not mention $b_{n-1}$, $p_{n-1}$, $q_{n-1}$, $p_{n-2}$, or $q_{n-2}$, all of which appear in (\ref{eq:101}) and therefore determine the coverings used by Algorithm \ref{alg:1}. Since there are infinitely many values these variables might attain, a practical definition of admissibility should adjust for the redundancy of checking every potential covering. Definition \ref{def:admit} requires that $\fb$ be $\eps$-reduced, so the number of coverings checked is bounded by a small multiple of the class number (or exactly the class number if $\eps$ is sufficiently close to $1$). This facilitates proofs of admissibility and searches for admissible sets. Unfortunately, it also obscures the relationship between admissibility and Algorithm \ref{alg:1}. For example, it is likely not clear at this point why coverings indexed by $\fb$ suffice. And while ``$a$" from Definition \ref{def:admit} is directly related to its counterpart in lines 4--6 of Algorithm \ref{alg:1}, they are not equal. The precise connection is postponed until Section \ref{sec:4}.

It may be useful to first consider Algorithm \ref{alg:1} in a Euclidean ring with $B=\{1\}$. The \textbf{if} condition in line 5 becomes trivial and can be ignored. It is then the Hurwitz algorithm with the exception that we are not requiring $a_n$ to be the nearest integer to $z_{n-1}$, only that $|z_{n-1}-a_n|<\eps$.

\begin{definition}\label{def:ideal}The \emph{left-column ideal} of a matrix $M$, denoted $(M)_{\ell}$, is the ideal generated by its left-column entries. Define the \emph{right-column ideal}, $(M)_r$, similarly.\end{definition}

\begin{algorithm}[H]\caption{Compute continued fraction convergents of $z\in\bC$ over $\cO$. Any method (like Algorithm \ref{alg:2}) for choosing among multiple coefficient pairs $a,b$ satisfying line 5 may be used.}\label{alg:1}
\begin{flushleft}
\hspace*{\algorithmicindent}\textbf{input:} $z\in\bC$, $N\in\bN$, $B$ admissible with $\eps\in(0,1)$ as per Definition \ref{def:admit}\\
\hspace*{\algorithmicindent}\textbf{output:} $p_N,q_N\in\cO$ with $p_N/q_N$ approximating $z$ 
\end{flushleft}
\begin{algorithmic}[1]
    \State $M\gets\text{Id}\in\text{SL}_2(\cO)$\Longtop{convergents are column ratios}
    \State $b'\gets 1$\Bottom{of $M$ as in (\ref{eq:2})}
    \For{$n\gets 1$ \textbf{to} $N$}
    \For{$b\!\in\!B$, $a\!\in\!\cO\cap\! D(bM^{-1}(z),\eps|b'|)$}\Longtop{$M^{-1}$ is M\"{o}bius transformation}
    \If{$(MS(a/b',b/b'))_{\ell}$ is integral and \Longtop{Algorithm \ref{alg:2} gives a subroutine}\Statex \hspace{\algorithmicindent}\hspace{\algorithmicindent}$\eps$-reduced}\Bottom{that finds $a,b$ for certain $B$}
        \State $M\gets MS(a/b',b/b')$
        \State $b'\gets b$
        \State \textbf{break}
        \Statex \vspace{-\baselineskip}
    \EndIf
    \EndFor
    \State\textbf{if} $z = M_{1,1}/M_{2,1}$ \textbf{then break}\Top{we found $z$ exactly}
    \Statex \vspace{-\baselineskip}
    \EndFor
    \State \Return $M_{1,1},M_{2,1}$
\end{algorithmic}
\end{algorithm}

\begin{notation}\label{not:vars}Let $a_n$, $b_n$, and $M_n$ denote ``$a$," ``$b'$," and ``$M$" after completing the $n^{\text{th}}$ outer \textbf{for} loop iteration, with $b_0$ and $M_0$ being initial values, let $z_n=M_n^{-1}(z)$, and let $p_n$ and $q_n$ denote the left column entries of $M_n$. Its right column entries are then $p_{n-1}$ and $q_{n-1}$, which we use to define $p_{-1}=0$ and $q_{-1}=1$.\end{notation}

It follows from line 6 that our variables satisfy the same relations that hold in Euclidean cases when $B=\{1\}$. (Results do not mention the input $N$ or whatever the terminating index happens to be.)

\begin{proposition}\label{prop:rels}If $n\geq 1$ then $$p_n=\frac{a_np_{n-1}+b_np_{n-2}}{b_{n-1}},\hspace{1cm}q_n=\frac{a_nq_{n-1}+b_nq_{n-2}}{b_{n-1}},\hspace{1cm}z_n=\frac{b_{n-1}}{b_nz_{n-1}-a_n},$$  $$\frac{p_n}{q_n} = \frac{a_1}{b_1} +\cfrac{b_0/b_1}{\cfrac{a_2}{b_2}+\cfrac{b_1/b_2}{\raisebox{6pt}{$\ddots$} \; \cfrac{a_{n-1}}{b_{n-1}}+ \cfrac{b_{n-2}/b_{n-1}}{a_n/b_n}}}\,,$$ and $\det M_n=(-1)^nb_n$.\end{proposition}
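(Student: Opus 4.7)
The plan is to derive the whole proposition mechanically from the single matrix identity
\[M_n = M_{n-1}\,S\!\left(\frac{a_n}{b_{n-1}},\,\frac{b_n}{b_{n-1}}\right)\]
recorded in line~6 of Algorithm~\ref{alg:1}, together with the initial conditions $M_0 = I$, $b_0 = 1$, and the boundary conventions $p_{-1} = 0$, $q_{-1} = 1$ from Notation~\ref{not:vars}. First I would multiply out the right-hand side and compare columns: the left column of the product equals $(a_n/b_{n-1})$ times the left column of $M_{n-1}$ plus $(b_n/b_{n-1})$ times its right column, which by Notation~\ref{not:vars} are $(p_{n-1},q_{n-1})^{\top}$ and $(p_{n-2},q_{n-2})^{\top}$ respectively. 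Reading off the two entries of the new left column yields the stated recursions for $p_n$ and $q_n$.

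Next, for $z_n$, I would use $z_n = M_n^{-1}(z) = S(a_n/b_{n-1},b_n/b_{n-1})^{-1}(z_{n-1})$ and compute that the M\"obius action of $S(a,b)^{-1}$ sends $w$ to $1/(bw-a)$. Substituting $(a,b) = (a_n/b_{n-1},b_n/b_{n-1})$ and simplifying produces the third formula. The determinant identity is then a one-line induction: $\det S(a,b) = -b$, so $\det M_n = \det M_{n-1}\cdot(-b_n/b_{n-1})$, and the base case $\det M_0 = 1 = (-1)^0 b_0$ propagates to $\det M_n = (-1)^n b_n$.

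For the continued-fraction display, I would iterate the matrix recursion to obtain
\[M_n = S(a_1,b_1)\,S(a_2/b_1,b_2/b_1)\cdots S(a_n/b_{n-1},b_n/b_{n-1}),\]
using $b_0 = 1$ to simplify the first factor. Since $M_n(\infty) = p_n/q_n$ and $S(a,b)(w) = a/b + 1/(bw)$, I would unfold this composition from the outside inward: the innermost evaluation is $S(a_n/b_{n-1},b_n/b_{n-1})(\infty) = a_n/b_n$, and each outer application of $S(a_k/b_{k-1},b_k/b_{k-1})$ converts the current expression $w$ into $a_k/b_k + (b_{k-1}/b_k)/w$, which is exactly the nested fraction in the statement.

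I do not expect any real obstacle; everything reduces to two routine $2\times 2$ matrix computations plus a short induction. The only point requiring some care is the bookkeeping of the $b_{k-1}$ denominators that each matrix factor injects, since these are precisely what produce the coefficients $b_{k-1}/b_k$ above $a_k/b_k$ at each depth of the continued fraction.
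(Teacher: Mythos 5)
Your proposal is correct and is precisely the routine verification the paper is pointing to by leaving the proposition unproved (the \qed following the statement) and remarking that "it follows from line~6." Every step checks: the column-reading of $M_n=M_{n-1}S(a_n/b_{n-1},b_n/b_{n-1})$ gives the $p_n,q_n$ recursions, the M\"obius action of $S(a,b)^{-1}$ gives the $z_n$ formula, $\det S(a,b)=-b$ with $b_0=1$ gives the determinant by induction, and unfolding $M_n(\infty)$ via $S(a,b)(w)=a/b+1/(bw)$ gives the displayed continued fraction.
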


\begin{proof}The expressions for $p_n$, $q_n$, $z_n$, and $\det M_n$ follow directly from line 6 (and induction for $\det M_n$). Viewing our matrices as M\"{o}bius transformations, from the new expressions for $p_n$ and $q_n$ we see that $$\frac{p_n}{q_n} = M_{n-1}\!\left(\frac{a_n}{b_n}\right)=\bigg(S\!\left(\frac{a_1}{b_0},\frac{b_1}{b_0}\right)\circ\cdots \circ S\!\left(\frac{a_{n-1}}{b_{n-2}},\frac{b_{n-1}}{b_{n-2}}\right)\bigg)\left(\frac{a_n}{b_n}\right).$$ The continued fraction given in the proposition is an expansion of the right-hand side since $S(a,b)(z) = a/b + 1/bz$.\end{proof}

\subsection{An example}\label{ss:example} Recall the example in Subsection \ref{ss:intuit} for $\bQ(\sqrt{-23})$. It starts with $z=-1.26+0.48i$ and $B=\{1,2\}$. Let $\eps=\sqrt{8/9}$ and $\tau=(1+\sqrt{-23})/2$.

Prior choices of coefficients are $a_1=-2$, $a_2=1$, and $a_3=-1+\tau$, which center the outlined discs in Figure \ref{fig:1} that contain $z_0$, $z_1$, and $z_2$. We claim these still meet the requirements of Algorithm \ref{alg:1} alongside $b_1=1$, $b_2=1$, and $b_3=1$. Indeed, when $b_{n-1}=b_n=1$, the disc containment in line 4 is the same as $z_{n-1}\in D(a_n,\eps)$. In our example, the radii in Figure \ref{fig:1} can be scaled by $\eps$ and still cover $z_0$, $z_1$, and $z_2$. Moreover, line 5's requirement that $(M_{n-1}S(a_n/1,1/1))_{\ell}=(M_n)_{\ell}$ be $\eps$-reduced is satisfied since $\det M_n=\pm1$ implies $(M_n)_{\ell}=\cO$.

So we keep our original three coefficients. Starting with the identity matrix, $M_0$, line 6 gives $$M_1=\begin{bmatrix}-2 & 1 \\ 1 & 0\end{bmatrix},\hspace{1cm}M_2=\begin{bmatrix}-1 & -2 \\1  & 1\end{bmatrix},\hspace{0.5cm}\text{and}\hspace{0.5cm}M_3=\begin{bmatrix}-1-\tau & -1 \\ \tau & 1\end{bmatrix}.$$

The previously discussed choice of $a_4=\tau$ and $b_4=2$ also passes the \textbf{if} condition in line 5. Indeed, it gives $$M_4=\begin{bmatrix}4-2\tau & -1-\tau \\ -4+\tau & \tau\end{bmatrix},$$ and thus $(M_4)_{\ell}=(\tau,2)$. This is a (split) prime over $2$, which is $\eps$-reduced regardless of the value of $\eps$. We get $z_4=M_4^{-1}(z)\approx 1.43+0.96i$.

Consider the top row of $M_4$. We cannot use $b_5=1$ because $a(4-2\tau) + 1(-1-\tau)$ is not divisible by $b_4=2$ for any $a\in\bZ[\tau]$. As $b_5$ must come from $\{1,2\}$, $b_5=2$ is forced. So line 4 looks for $a_5\in\cO \cap D(2z_4,2\eps)$, which rearranges to $z_4\in D(a_5/2,\eps)$. Turning to the second row of $M_4$, $a_5(-4+\tau) + 2\tau$ is divisible by $2$ if and only if $a_5\in(\overline{\tau},2)$. The first image of Figure \ref{fig:4} shows that unit discs on $a/2$ for $a\in (\overline{\tau},2)$ do indeed cover the plane with radius-scaling room to spare. In particular, we may take $a_5=1+\tau$. The highlighted disc is $D((1+\tau)/2,1)$.

\begin{figure}
\centering
\begin{overpic}[unit=1mm,trim=2cm 0cm 2cm 0cm,clip,height=5.8cm]{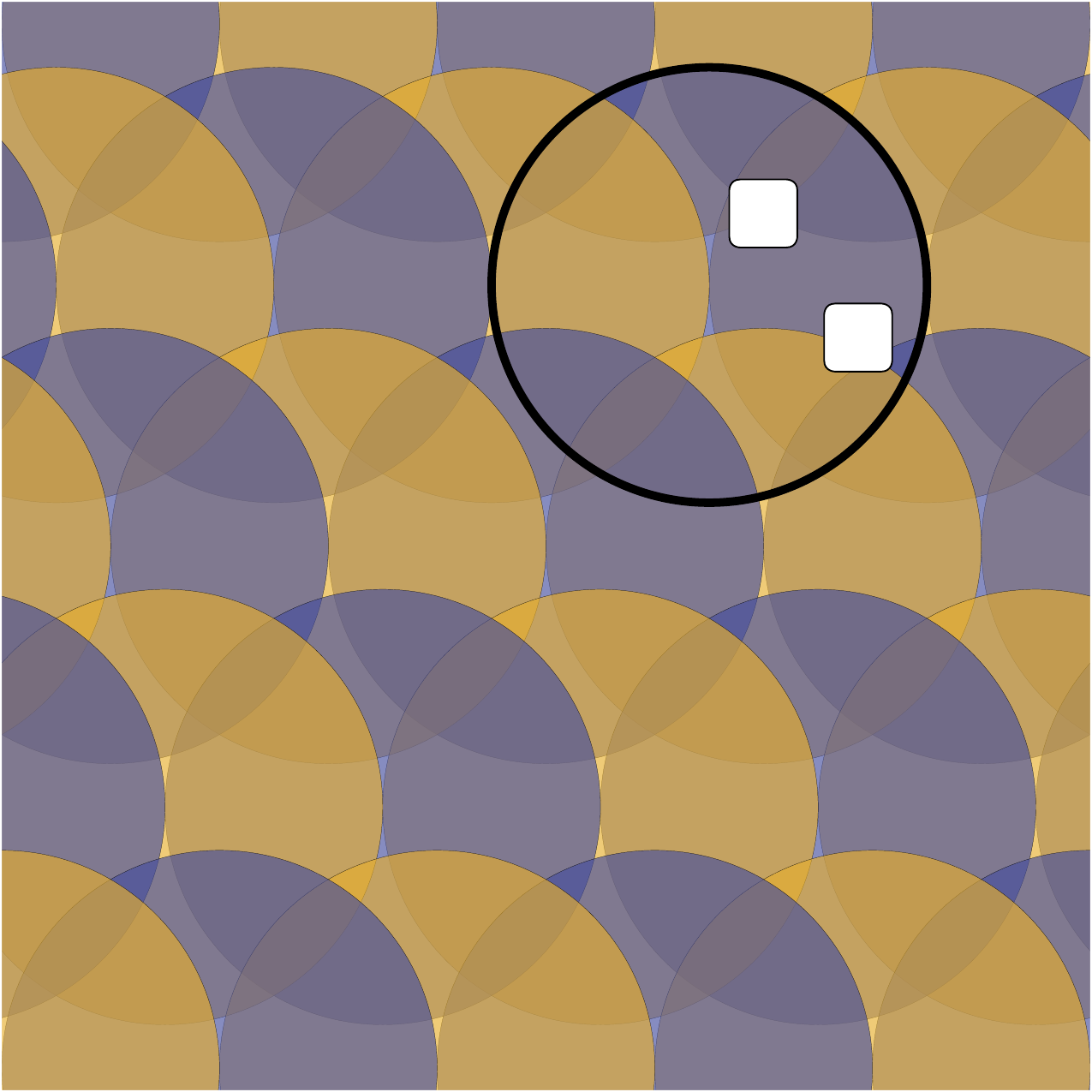}
    \put(39.3,39.01){\small$4$}
    \put(34.36,45.4){\small$\overline{6}$}
\end{overpic}\hspace{0.5cm}
\begin{overpic}[unit=1mm,trim=2cm 0cm 2cm 0cm,clip,height=5.8cm]{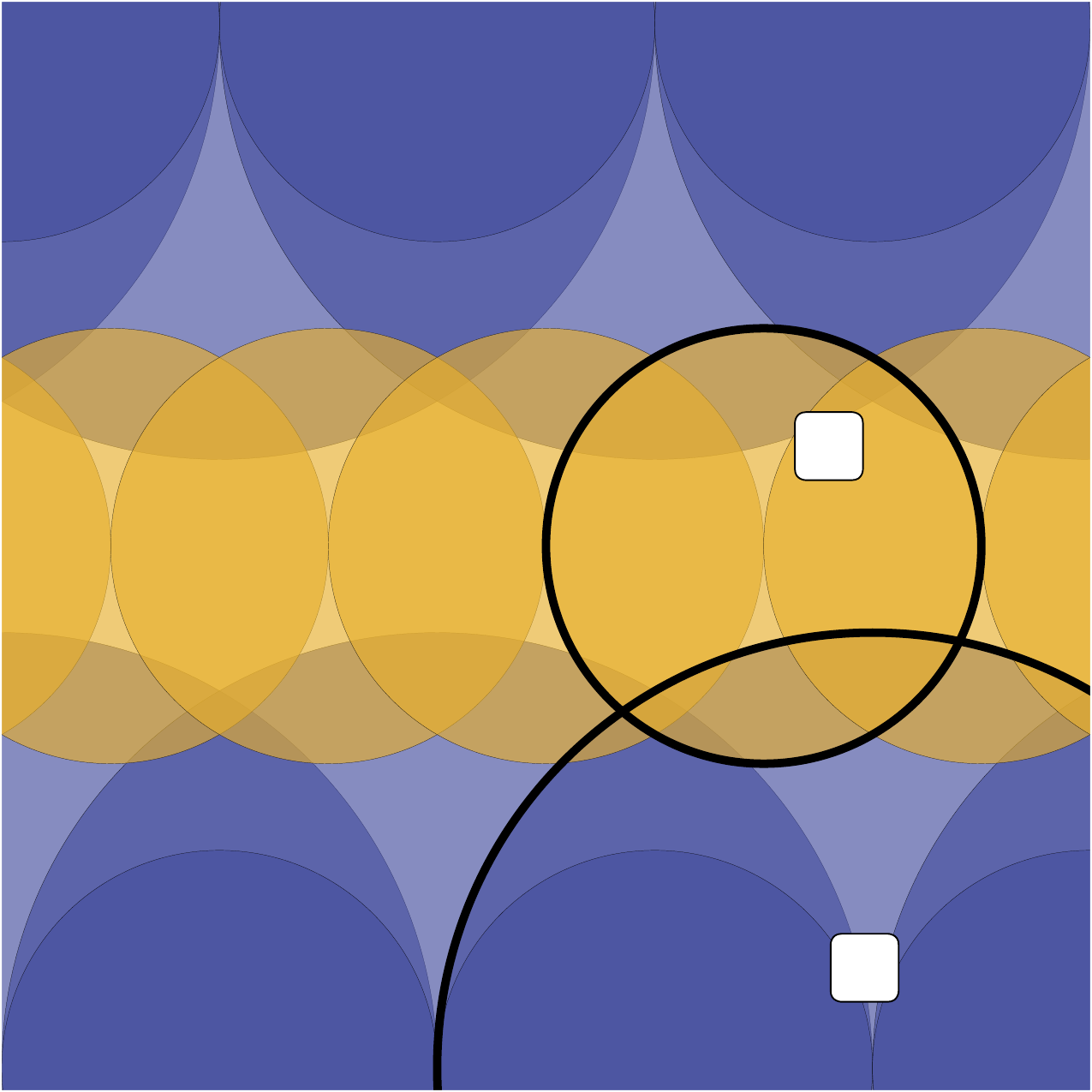}
    \put(37.8,33.25){\small$5$}
    \put(39.7,5.35){\small$\overline{7}$}
\end{overpic}
\caption{Left: $z_4$ and $\overline{z_6}$ in a disc on $a_5/b_5=\overline{a_7/b_7}=(1+\tau)/2$. Right: $z_5$ and $\overline{z_7}$ in discs on $a_6/b_6=1$ and $\overline{a_8/b_8}=2-\tau$.}\label{fig:4}
\end{figure}

The congruence requirement on $a$ and $b$ can be computed similarly from $$M_5=\begin{bmatrix}7-\tau & 4-2\tau\\-5 & -4+\tau\end{bmatrix}.$$ It is $a\equiv\tau\,\text{mod}\,2$ if $b=1$, and $a$ can be any integer if $b=2$. (But $a\equiv 2\tau\,\text{mod}\,4$ and $b=2$ needs to be reduced to $a/2$ and $b/2$ according to Definition \ref{def:reduce} because $1/2\leq \eps^2 = 8/9$.) The corresponding discs of radius $b_5/b=2/b$ and center $a/b$ are displayed in the second image of Figure \ref{fig:4}. We see that $a_6=b_6=2$ satisfies $z_5\in D(a_6/b_6,|b_5/b_6|) = D(1,1)$, again with room to scale radii by $\eps$.

The arrangement of discs that occurs for $z_6$, $\cup D(a/b,2/b)$ for $a\in\cO$ and $b\in\{1,2\}$ that make $(M_6S(a/b_5,b/b_5))_{\ell}$ $\eps$-reduced, is the vertical reflection of $z_4$'s. So the first image of Figure \ref{fig:4} shows $\overline{z_6}$ in the disc centered on a possible choice of $\overline{a_7/b_7}$, which happens to be the same disc we chose for $z_4$. We are also able to squeeze $\overline{z_7}$ into $z_5$'s image.

The resulting convergents for $n\leq 10$ are given in Table \ref{table:1} along with approximation quality. It can be checked that $|(q_nz-p_n)| < \eps|q_{n-1}z-p_{n-1}|$ with $\eps=\sqrt{8/9}$, a direct result of $|b_nz_{n-1}-a_n|<\eps|b_{n-1}|$.

\setlength\tabcolsep{0.22cm}
\renewcommand{\arraystretch}{1.2}

\begin{table}[ht]
\setlength{\abovecaptionskip}{-0.1\baselineskip}
\setlength{\belowcaptionskip}{0.4\baselineskip}
\begin{tabular}{|c|c|c|c|c|}
\hline\rowcolor{Gray}
$n$ & $\approx z_{n-1}$ & $a_n/b_n$ & $p_n/q_n$ & $\approx|q_nz-p_n|$\\ \hline
1 & $-1.26+0.48i$ & $-2/1$ & $-2/1$ & $0.882$\\ \hline
2 & $0.95-0.62i$ & $1/1$ & $-1/1$ & $0.5459$\\ \hline
3 & $-0.13+1.61i$ & $(-1+\tau)/1$ & $(-1-\tau)/\tau$ & $0.4754$\\ \hline
4 & $0.49+1.04i$ & $\tau/2$ & $(4-2\tau)/(-4+\tau)$ & $0.2757$\\ \hline
5 & $1.43+0.96i$ & $(1+\tau)/2$ & $(7-\tau)/(-5)$ & $0.2$\\ \hline
6 & $1.3+0.46i$ & $2/2$ & $(11-3\tau)/(-9+\tau)$ & $0.1096$\\ \hline
7 & $1-1.53i$ & $(2-\tau)/2$ & $(9-8\tau)/(-11+5\tau)$ & $0.0451$\\ \hline
8 & $1.46+1.94i$ & $(1+\tau)/1$ & $(34-5\tau)/(-25)$ & $0.0104$\\ \hline
9 & $-0.34+4.32i$ & $(-2+2\tau)/1$ & $(1+60\tau)/(39-45\tau)$ & $0.0085$\\ \hline
10 & $0.99+0.72i$ & $1/1$ & $(35+55\tau)/(14-45\tau)$ & $0.0061$\\ \hline
\end{tabular}
\captionsetup{width=.93\linewidth}
\caption{Coefficients, convergents, and approximation quality from Algorithm \ref{alg:1} with $\Delta=-23$ using $B=\{1,2\}$ and $\eps = \sqrt{8/9}$.}\label{table:1}
\end{table}

\renewcommand{\arraystretch}{1}

Observe that the last two continuants satisfy $|q_9|^2=11916$ and $|q_{10}|^2=11716$. For classical continued fractions and Hurwitz' algorithm over the Euclidean rings, continuant magnitudes increase monotonically. This fails in general. But Theorem \ref{thm:growq} asserts that the degree to which continuant monotonicity fails is bounded by a constant depending only on $B$ and $\eps$.

We end this section with a remark on the colors in Figures \ref{fig:3} and \ref{fig:4}. The second image of Figure \ref{fig:4} is a scaled and shifted copy of Figure \ref{fig:3}. It turns out that up to scaling, shifting, and reflecting, the two disc arrangements in Figure \ref{fig:4} are the only ones that can occur. (Such similarity of arrangements is how we get away with the apparently scant number of coverings provided by Definition \ref{def:admit}.) Colors foreshadow which of the two types of arrangement occurs next: yellow for the first image in Figure \ref{fig:4} and blue for the second. Our choice of discs containing $z_0$, $z_1$, $z_2$, $z_4$, $z_6$, and $z_7$ are blue, so it is a scaled or shifted copy of the second image in Figure \ref{fig:4} that must cover $z_1$, $z_2$, $z_3$, $z_5$, $z_7$, and $z_8$. Since yellow discs are chosen to cover $z_3$ and $z_5$, a scaled, shifted, or reflected copy of the first image in Figure \ref{fig:4} must cover $z_4$ and $z_6$. 

Which of the two disc arrangements (either the first or second image in Figure \ref{fig:4}) appears in stage $n$ is determined by the ideal class of $(M_{n-1})_{\ell}$---trivial is the second image and nontrivial the first. So when drawing discs, the appropriate color for $D(a/b,b/b')$ can determined by computing what the ideal class of $(M_n)_{\ell}$ would be if $a$ and $b$ were selected as coefficients. That is, we compute the ideal class of $(M_{n-1}S(a/b',b/b'))_{\ell}$---trivial gets blue and nontrivial gets yellow.  Note that there are two nontrivial ideal classes for $\bQ(\sqrt{-23})$. But they are inverses, implying complex conjugation maps an ideal in one class to an ideal in another. The two disc arrangements that occur when $(M_{n-1})_{\ell}$ is conjugate are vertical reflections of one another (perhaps scaled or shifted as well). This is why the the first image in Figure \ref{fig:4} may appear reflected at future stages, as it is for covering $z_6$ in stage 7. The second image in Figure \ref{fig:4} is preserved by conjugation, as is true of the class of principal ideals.

It would be interesting to study whether the sequence of ideal classes of $(M_n)_{\ell}=(p_n,q_n)$, rather than the actual convergents $p_n/q_n$, still carries information about the input $z$.

\section{Classical Properties}\label{sec:3} This section rifles through Hensley's litmus test for continued fractions (Section 5.2 of \cite{hensley}). Essentially, properties of the nearest integer algorithm over $\bZ$ are retained at the expense of constants (meaning with respect to $n$ and $z$) that grow with the discriminant magnitude $|\Delta|$.

\subsection{Convergents}\label{ss:conv} The following observation is often used without mention.

\begin{lemma}\label{lem:z}If $n\geq 1$ then $|z_n| \geq 1/\eps$.\end{lemma}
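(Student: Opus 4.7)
The plan is to read off this bound directly from the algorithm's selection criterion combined with the recursion in Proposition \ref{prop:rels}. Specifically, Proposition \ref{prop:rels} gives
$$z_n = \frac{b_{n-1}}{b_n z_{n-1} - a_n},$$
so the claim $|z_n|\geq 1/\eps$ is equivalent to the inequality $|b_n z_{n-1} - a_n| \leq \eps |b_{n-1}|$.

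The latter is exactly the disc-containment condition driving line 4 of Algorithm \ref{alg:1}. There, the pair $(a_n,b_n)$ is chosen so that $a_n\in \cO\cap D(b_n M_{n-1}^{-1}(z),\eps|b_{n-1}|)$. Recalling from Notation \ref{not:vars} that $z_{n-1}=M_{n-1}^{-1}(z)$, this disc containment rewrites precisely as $|b_n z_{n-1} - a_n|\leq \eps|b_{n-1}|$, which is what we need. Substituting into the recursion completes the proof.

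The only thing to double-check is that $b_{n-1}$ in the recursion matches the quantity ``$b'$'' used as the disc radius scaling in line 4, but this is immediate from Notation \ref{not:vars}: after the $(n-1)^{\text{st}}$ iteration the stored value of $b'$ is $b_{n-1}$, and this is exactly what appears in the radius $\eps|b'|$ when the $n^{\text{th}}$ iteration executes line 4. So there is no real obstacle; the lemma is essentially a restatement of the selection rule in the language of the renormalized tail $z_n$. The $n\geq 1$ hypothesis is needed because $z_0=z$ is arbitrary and need not satisfy any such bound.
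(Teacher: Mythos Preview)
Your proof is correct and follows essentially the same approach as the paper: both invoke the recursion $z_n=b_{n-1}/(b_nz_{n-1}-a_n)$ from Proposition \ref{prop:rels} and observe that the disc-containment condition $a_n\in D(b_nz_{n-1},\eps|b_{n-1}|)$ from line 4 is exactly the inequality $|b_nz_{n-1}-a_n|\leq\eps|b_{n-1}|$. Your write-up is simply more explicit about matching the algorithm's variables $b'$ and $M^{-1}(z)$ to $b_{n-1}$ and $z_{n-1}$ via Notation \ref{not:vars}.
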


\begin{proof}By Proposition \ref{prop:rels}, $z_n=b_{n-1}/(b_nz_{n-1}-a_n)$. So $|z_n|\geq 1/\eps$ is equivalent to $a_n\in D(b_nz_{n-1},\eps|b_{n-1}|)$, which is from line 4 of Algorithm \ref{alg:1}.\end{proof}

\begin{proposition}\label{prop:mono}If $n\geq 1$ then $|q_nz - p_n| \leq \eps|q_{n-1}z - p_{n-1}|$. In particular, $|q_nz-p_n|\leq\eps^n$.\end{proposition}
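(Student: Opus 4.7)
The proof should be a short application of the preceding lemma via the Möbius-transformation description of $z_n$. The main observation is that $z_n = M_n^{-1}(z)$ can be written explicitly in terms of the convergent errors $q_nz - p_n$ and $q_{n-1}z - p_{n-1}$, exactly as in equation (\ref{eq:3}) for the Hurwitz algorithm. The plan is therefore to re-derive (or re-use) that formula in the generalized setting and then substitute the lower bound $|z_n|\geq 1/\eps$.

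More precisely, I would first compute the Möbius transformation associated to $M_n^{-1}$. Given
$$M_n = \begin{bmatrix} p_n & p_{n-1} \\ q_n & q_{n-1} \end{bmatrix},$$
the inverse is $\frac{1}{\det M_n}\begin{bmatrix} q_{n-1} & -p_{n-1} \\ -q_n & p_n \end{bmatrix}$, and the scalar $\det M_n = (-1)^n b_n$ from Proposition \ref{prop:rels} cancels inside the Möbius action. Hence
$$z_n \;=\; M_n^{-1}(z) \;=\; \frac{q_{n-1}z - p_{n-1}}{p_n - q_nz},$$
which is exactly (\ref{eq:3}) without the sign, and taking absolute values gives
$$|z_n| \;=\; \frac{|q_{n-1}z - p_{n-1}|}{|q_nz - p_n|}.$$
Applying the previous lemma's bound $|z_n|\geq 1/\eps$ immediately yields $|q_nz - p_n|\leq \eps|q_{n-1}z-p_{n-1}|$.

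For the second statement, iterate the inequality down to $n=0$: since $M_0 = \mathrm{Id}$, the left column is $(p_0,q_0) = (1,0)$, so $|q_0 z - p_0| = 1$, and induction gives $|q_nz-p_n|\leq \eps^n$.

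There is no real obstacle here; the only thing to be careful about is that equation (\ref{eq:3}) was stated in the intuition subsection for the Hurwitz-style algorithm with $b_n=1$, so one should explicitly verify that the same Möbius identity survives the passage to general $S(a_n/b_{n-1},b_n/b_{n-1})$ matrices. That verification is a one-line matrix inversion, using only that the determinant factor cancels in a Möbius action.
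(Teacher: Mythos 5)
Your proof is correct and matches the paper's (which simply says ``Recall that $M_n^{-1}(z)=z_n$ and apply the lemma''); you have spelled out the M\"obius identity and the base case $|q_0z-p_0|=1$, but the substance is the same.
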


\begin{proof}Notation \ref{not:vars} defines $z_n$ to be $M_n^{-1}(z)=(q_{n-1}z-p_{n-1})/(p_n-q_nz)$. So the first inequality in the proposition is equivalent to $|z_n|\geq 1/\eps$, which is the previous lemma. The second assertion follows by induction.\end{proof}

\begin{corollary}\label{cor:nzero}If $n\geq 1$ then $a_{n+1}$ and $q_n$ are nonzero, and $n$ marks the first occurrence of $p_n/q_n$ as a convergent.\end{corollary}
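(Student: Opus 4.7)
The plan is to extract all three conclusions from the strict monotonicity implicit in Proposition \ref{prop:mono}, together with Proposition \ref{prop:rels} and the fact that each column ideal $(M_n)_1=(p_n,q_n)$ is reduced by line 5 of Algorithm \ref{alg:1}. First I would note that the estimate $|q_nz-p_n|\leq\eps|q_{n-1}z-p_{n-1}|$ is in fact strict whenever the right-hand side is nonzero, since $\eps<1$. That right-hand side is indeed nonzero throughout: for $n=1$ it equals $|q_0z-p_0|=1$, and for $n\geq 2$ the algorithm would have exited at line 12 of iteration $n-1$ otherwise.

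Next I would handle $q_n\neq 0$. If $q_n$ were zero, then $|q_nz-p_n|=|p_n|\leq\eps^n<1$; but nonzero elements of the order $\cO$ have magnitude at least $1$, so $p_n=0$ as well. Then $(M_n)_1$ is the zero ideal, contradicting $\det M_n=(-1)^nb_n\neq 0$ from Proposition \ref{prop:rels}.

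For the first-occurrence claim, I would assume $p_n/q_n=p_m/q_m=:r\in K$ for some $1\leq m<n$ and derive a contradiction. The identities $p_n=rq_n$ and $p_m=rq_m$ yield the fractional-ideal relation $(p_n,q_n)=(q_n/q_m)(p_m,q_m)$. Both $(p_n,q_n)$ and $(p_m,q_m)$ are reduced, so each attains the minimum norm in their common ideal class; the principal factor $(q_n/q_m)$ must therefore have norm one, giving $|q_n|=|q_m|$. Hence $|q_nz-p_n|=|q_n|\,|z-r|=|q_m|\,|z-r|=|q_mz-p_m|$, contradicting the strict monotonicity recorded in the first paragraph.

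Finally, if $a_{n+1}=0$ then Proposition \ref{prop:rels} gives $p_{n+1}=(b_{n+1}/b_n)p_{n-1}$ and $q_{n+1}=(b_{n+1}/b_n)q_{n-1}$. For $n=1$ this forces $q_2=(b_2/b_1)q_0=0$, contradicting the $q_n\neq 0$ step applied at index $2$. For $n\geq 2$, dividing yields $p_{n+1}/q_{n+1}=p_{n-1}/q_{n-1}$, which violates the first-occurrence claim at index $n+1$. I expect the main obstacle to be the first-occurrence argument, which rests on the observation that coinciding convergent ratios place the two reduced column ideals in the same ideal class, so that reducedness pins down their norms; the remaining steps are routine bookkeeping from the earlier propositions.
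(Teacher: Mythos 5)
Your proposal is correct and follows essentially the same route as the paper: the paper likewise derives $q_n\neq 0$ from Proposition \ref{prop:mono} together with integrality of $p_n$, derives the first-occurrence claim by noting that a repeated convergent ratio forces equal norms of the two reduced column ideals (hence equal $|q|$, contradicting strict monotonicity), and then deduces $a_{n+1}\neq 0$ from that same argument. Your write-up simply makes explicit the ideal-class and $\det M_n\neq 0$ details that the paper leaves compressed.
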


\begin{proof}That $q_n\neq 0$ follows from $p_n\in\cO$ and the second assertion of Proposition \ref{prop:mono}: $|q_nz-p_n|\leq\eps^n<1$. 

If $p_{n'}/q_{n'}=p_n/q_n$ with $n\geq n'$ then $(q_n/q_{n'})(p_{n'},q_{n'}) = (p_n,q_n)\subseteq\cO$. Proposition \ref{prop:mono} gives $\eps^{n-n'}\geq|q_nz-p_n|/|q_{n'}z-p_{n'}|=|q_n/q_{n'}|$, which implies $n-n' < 2$ by Definition \ref{def:reduce} because $(p_{n'},q_{n'})$ is $\eps$-reduced. For the case $n = n'+1$ we have $p_nq_{n-1}-p_{n-1}q_n = b_n\neq 0$ by Proposition \ref{prop:rels}. 

Now that we know $p_{n+1}/q_{n+1}\neq p_{n-1}/q_{n-1}$, Proposition \ref{prop:rels}'s formulas for $p_{n+1}$ and $q_{n+1}$ show that $a_{n+1}\neq 0$ for $n\geq 1$.\end{proof}

\begin{corollary}\label{cor:term}If $z=p/q$ for $p,q\in\cO$, then $p_n/q_n = z$ for some $n\leq \lfloor 1-\log_{\eps}\!|q|\rfloor$.\end{corollary}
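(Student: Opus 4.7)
The plan is to prove the corollary by contradiction, using Proposition \ref{prop:mono} together with the fact that nonzero elements of an order in an imaginary quadratic field have magnitude at least $1$. Set $N := \lfloor 1 - \log_\eps|q|\rfloor$ and suppose, toward contradiction, that $p_n/q_n \neq z$ for every $1\leq n\leq N$. Since the only termination condition in Algorithm \ref{alg:1} is the check $z = M_{1,1}/M_{2,1}$ on line 10, this assumption guarantees the algorithm produces all convergents up through $p_N/q_N$.

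For each such $n$, I would write
\[
q_n z - p_n \;=\; \frac{q_n p - p_n q}{q},
\]
and observe that $q_n p - p_n q$ is a nonzero element of $\cO$ (nonzero precisely because $p_n/q_n\neq p/q$). Since any nonzero element of $\cO$ has norm in $\bZ_{\geq 1}$, and the norm equals the squared magnitude in an imaginary quadratic field, we get $|q_n p - p_n q|\geq 1$, hence $|q_n z - p_n|\geq 1/|q|$. Specializing this to $n=N$ and combining with Proposition \ref{prop:mono}'s bound $|q_N z-p_N|\leq\eps^N$ gives $\eps^N\geq 1/|q|$, i.e.\ $N\leq -\log_\eps|q|$ (the direction flips because $\log\eps<0$).

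The final step is the (slightly fussy but elementary) floor arithmetic: with $y:=-\log_\eps|q|\geq 0$, we have $N = \lfloor 1+y\rfloor = 1+\lfloor y\rfloor$. Since $\lfloor y\rfloor \geq y - 1$ fails to be sharp by a full unit unless $y\in\bZ$ and even then $1+\lfloor y\rfloor = 1 + y > y$, we always have $N > y = -\log_\eps|q|$, which contradicts $N\leq -\log_\eps|q|$. So the assumption fails, and some $n\leq N$ must satisfy $p_n/q_n = z$.

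There is no genuine obstacle here: the heart of the argument is the well-known integrality trick that turns a nonzero approximation error into a lower bound of $1/|q|$, and Proposition \ref{prop:mono} already supplies the matching exponential upper bound. The only point worth taking care with is verifying that $\lfloor 1+y\rfloor > y$ for every real $y\geq 0$, so that the chosen $N$ is indeed past the exponential decay threshold; this is immediate but worth stating so that the floor in the statement is not off by one.
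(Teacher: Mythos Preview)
Your argument is correct and follows the same idea as the paper's proof: both use that $q(q_nz-p_n)=q_np-p_nq$ is an integer bounded in magnitude by $\eps^n|q|$ via Proposition~\ref{prop:mono}, forcing it to vanish once $n=\lfloor 1-\log_\eps|q|\rfloor$. The paper states this directly in two sentences, whereas you frame it as a contradiction and spell out the termination check and the floor inequality $\lfloor 1+y\rfloor>y$; these are welcome clarifications but not a different route.
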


\begin{proof}By Proposition \ref{prop:mono}, the integer $q(q_nz-p_n)=q_np-p_nq$ is bounded in magnitude by $\eps^n|q|$. Setting this equal to $1$ and solving shows that $q_np-p_nq=0$ no later than $n= \lfloor 1-\log_{\eps}\!|q|\rfloor$.\end{proof}

\begin{notation}For a fixed admissible set $B$, let $\mu = \max_B\!|b|$.\end{notation}

The following lemma and theorem both have a counterparts (which we are not yet ready to prove), Lemma \ref{lem:up} and Theorem \ref{thm:mainb}, where the directions of the inequalities are reversed.

\begin{customlem}{3.6a}\label{lem:low}\addtocounter{theorem}{1}If $n\geq 1$ then $$\left|1 + \frac{q_{n-1}}{q_nz_n}\right| > \frac{(1-\eps^2)|b_n|}{\mu}.$$\end{customlem}

\begin{proof}The proof proceeds by induction. The base case is $n=1$, which holds using $q_0=0$, $|b_1|\leq\mu$, and $\eps > 0$.

Let $n>1$ and assume the lemma's inequality holds when each index is decreased by one. Since $|z_n|\geq 1/\eps$ and $|b_n| \leq\mu$, the claim holds if $|q_{n-1}/q_n|< \eps$ by the triangle inequality---no need for induction. Otherwise, by Proposition \ref{prop:rels} we have $$\frac{1+q_{n-1}/q_nz_n}{b_n}=\frac{q_n+q_{n-1}/z_n}{b_nq_n} = \frac{(a_nq_{n-1}+b_nq_{n-2})+q_{n-1}(b_nz_{n-1}-a_n)}{b_{n-1}b_nq_n}=$$ \begin{equation}\frac{q_{n-2}+q_{n-1}z_{n-1}}{b_{n-1}q_n}=\frac{q_{n-1}z_{n-1}}{q_n}\left(\frac{1+q_{n-2}/q_{n-1}z_{n-1}}{b_{n-1}}\right).\label{eq:6}\end{equation} But we assumed $1\leq |q_{n-1}/\eps q_n|$, which is at most $|q_{n-1}z_{n-1}/q_n|$ by Lemma \ref{lem:z}. So the final expression in (\ref{eq:6}) is at least $|1+q_{n-2}/q_{n-1}z_{n-1}|/|b_{n-1}|$ in magnitude, which exceeds $(1-\eps^2)/\mu$ by the induction hypothesis.\end{proof}

\begin{customthm}{3.7a}\label{thm:main}\addtocounter{theorem}{1}If $n\geq 1$ then $|q_nz-p_n|$ is less than
$$i)\;\; \frac{\mu}{(1-\eps^2)|q_nz_n|},\hspace{1cm}ii)\;\;\frac{\mu}{(1-\eps^2)|q_{n+1}|},\hspace{0.5cm}\text{and}\hspace{0.5cm}iii)\;\;\frac{(1+\eps^2)\mu^2}{(1-\eps^2)|a_{n+1}q_n|}.$$\end{customthm}

\begin{proof}Consider the identity \begin{equation}q_nz - p_n = q_nM_n(z_n) - p_n =\frac{-\det M_n}{q_nz_n(1+q_{n-1}/q_nz_n)}.\label{eq:7}\end{equation} Since $\det M_n = (-1)^nb_n$, we see that $i)$ is a rearrangement of Lemma \ref{lem:low}. 

To prove $ii)$, increment the index in (\ref{eq:6}) and scale both ends by $q_{n+1}$ to get $$\frac{q_{n+1}(1+q_n/q_{n+1}z_{n+1})}{b_{n+1}}=\frac{q_nz_n(1+q_{n-1}/q_nz_n)}{b_n}.$$ Up to a sign, the reciprocal of the right-hand side above equals the final expression in (\ref{eq:7}). So we extend (\ref{eq:7}) as follow: \begin{equation}\frac{-\det M_n}{q_nz_n(1+q_{n-1}/q_nz_n)}=\frac{(-1)^{n+1}b_{n+1}}{q_{n+1}(1+q_n/q_{n+1}z_{n+1})}.\label{eq:8}\end{equation} Lemma \ref{lem:low} bounds the magnitude of the last expression by $\mu/(1-\eps^2)|q_{n+1}|$.

Finally, $|b_{n+1}z_n-a_{n+1}|\leq\eps|b_n|$ gives $|a_{n+1}| \leq |b_{n+1}z_n| + \eps|b_n|\leq (|z_n|+\eps)\mu\leq(1+\eps^2)\mu|z_n|$. This provides a lower bound for $|z_n|$ in terms of $|a_{n+1}|$ that can be substituted into $i)$ to prove $iii)$.\end{proof}

As with classical continued fractions, if $|q(qz-p)|$ is sufficiently small for $p,q\in\cO$, then $p/q$ appears as a convergent in the expansion of $z$. This property is interesting in our case because we are not specifying any particular method for choosing among multiple pairs $a,b$ passing the \textbf{if} condition in line 5. As such, the following lemma asserts that finding sufficiently good approximations is unavoidable.

\begin{lemma}\label{lem:combo}Let $p,q\in\cO$ with $q\neq 0$ and $z\neq p/q$. Take $n\geq -1$ to be the smallest index for which \begin{equation}\label{eq:9}|q_{n+1}|\geq \sqrt{\frac{|q|\mu}{|qz-p|(1-\eps^2)}},\end{equation} or, if no such index exists, take $n$ so that $p_n/q_n=z$. Then there exist $a,b\in\cO$ with $$|b|< 2\sqrt{\frac{|q(qz-p)|\mu}{(1-\eps^2)}}$$ satisfying $p=(ap_n+bp_{n-1})/b_n$ and $q=(aq_n+bq_{n-1})/b_n$. In particular, $p/q=p_n/q_n$ whenever $|q(qz-p)|\leq(1-\eps^2)/4\mu$.\end{lemma}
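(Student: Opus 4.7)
The plan is to solve explicitly for $a$ and $b$ via matrix inversion, then bound $|b|$ by a triangle-inequality split that lets minimality of $n$ control one factor and Theorem \ref{thm:main}(iii) control the other. First I would observe that the desired identities are equivalent to the single matrix equation $b_n\begin{bmatrix}p\\q\end{bmatrix}=M_n\begin{bmatrix}a\\b\end{bmatrix}$. Since $\det M_n=(-1)^nb_n$ by Proposition \ref{prop:rels}, inverting $M_n$ produces the explicit choices $a=(-1)^n(q_{n-1}p-p_{n-1}q)$ and $b=(-1)^n(p_nq-q_np)$, both of which lie in $\mathcal{O}$.

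The bound on $|b|$ then follows from the rewriting $b=\pm\bigl[q(q_nz-p_n)-q_n(qz-p)\bigr]$ (the $qz$ terms cancel), which yields $|b|\leq |q|\,|q_nz-p_n|+|q_n|\,|qz-p|$ by the triangle inequality. Minimality of $n$ applied at $n'=n-1$ (or the triviality $q_0=0$ when $n=0$) gives $|q_n|<\sqrt{|q|\mu/(|qz-p|(1-\eps^2))}$, so multiplying by $|qz-p|$ bounds the second summand by $\sqrt{|q(qz-p)|\mu/(1-\eps^2)}$. For the first summand with $n\geq 1$, Theorem \ref{thm:main}(iii) divided through by $|q_n|$ reads $|q_nz-p_n|<\mu/((1-\eps^2)|q_{n+1}|)$, and combining with the defining lower bound on $|q_{n+1}|$ produces the matching upper bound $\sqrt{|q(qz-p)|\mu/(1-\eps^2)}$. (In the "no such index" case $|q_nz-p_n|=0$ by choice of $n$, so the first summand vanishes.) Summing yields $|b|<2\sqrt{|q(qz-p)|\mu/(1-\eps^2)}$.

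The only real nuisance is $n=0$, where Theorem \ref{thm:main}(iii) does not apply. Here $|q_0z-p_0|=1$, and Proposition \ref{prop:rels} gives $q_1=b_1$, hence $|q_1|\leq\mu$; combining with the defining lower bound $|q_1|\geq\sqrt{|q|\mu/(|qz-p|(1-\eps^2))}$ yields $|q|\leq\mu(1-\eps^2)|qz-p|$, and since $(1-\eps^2)^2<1$ this implies $|q|\,|q_0z-p_0|=|q|<\sqrt{|q(qz-p)|\mu/(1-\eps^2)}$, so the earlier addition still goes through. Finally, for the "in particular" statement, substituting $|q(qz-p)|\leq(1-\eps^2)/4\mu$ into the $|b|$ bound gives $|b|<2\sqrt{1/4}=1$; since nonzero elements of $\mathcal{O}$ have magnitude at least $1$, this forces $b=0$, whence $p_nq=q_np$ and therefore $p/q=p_n/q_n$.
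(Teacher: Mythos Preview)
Your proof is correct and follows essentially the same route as the paper: solve for $b=\pm(p_nq-q_np)$, split via the triangle inequality, control $|q_n|\,|qz-p|$ by minimality of $n$, and control $|q|\,|q_nz-p_n|$ via Theorem~\ref{thm:main}(iii). The only organizational difference is that the paper folds the ``no such index'' case into a direct computation using $p_n=q_nz$, whereas you observe that the first summand simply vanishes; and you treat $n=0$ separately, which is harmless extra caution since the rearranged form $|q_nz-p_n|<\mu/((1-\eps^2)|q_{n+1}|)$ already holds at $n=0$ (because $|q_1|=|b_1|\le\mu$).
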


\begin{proof}For a given $n$, the value of $b\in\cO$ that makes $p=(ap_n+bp_{n-1})/b_n$ and $q=(aq_n+bq_{n-1})/b_n$ for the right choice of $a\in\cO$ is $b=p_nq-pq_n$. 

First suppose there exists an index for which (\ref{eq:9}) holds, and let $n$ be the smallest. The first inequality below is the triangle inequality, the second is Theorem \ref{thm:main} $ii)$, and the third comes from $|q_{n+1}|$ satisfying (\ref{eq:9}) but $|q_n|$ violating (\ref{eq:9}) by minimality of $n$:  $$|b|=|p_nq-pq_n|\leq |q_n(qz-p)|+|q(q_nz-p_n)|<$$ $$|q_n(qz-p)|+ \left|\frac{q}{q_{n+1}}\right|\frac{\mu}{(1-\eps^2)}<2\sqrt{\frac{|q(qz-p)|\mu}{(1-\eps^2)}}.$$

If (\ref{eq:9}) is never satisfied then $z$ must be rational because otherwise Proposition \ref{prop:mono} implies continuants grow without bound. By Corollary \ref{cor:term} we we can choose $n$ with $p_n/q_n=z$. The assumption that $|q_n|$ does not satisfy (\ref{eq:9}) bounds it from above. We use this upper bound for the inequality below: $$|b|=|p_nq-pq_n|=\left|\frac{q_n}{q}\right||q(qz-p)|<\sqrt{\frac{|q(qz-p)|\mu}{(1-\eps^2)}}.$$ 

For the last claim, combining $|q(qz-p)|\leq(1-\eps^2)/4\mu$ with the upper bound on $b$ in the statement of the lemma forces $|b|<1$. Thus $b=0$, implying $p/q=(ap_n/b_n)/(aq_n/b_n)=p_n/q_n$ as claimed.\end{proof}

\begin{theorem}\label{thm:best}If $p/q$ is not a convergent of $z$ for some $p,q\in\cO$ with $q\neq 0$, then $$|q_n(q_nz-p_n)|<\frac{4\eps\mu^2|q(qz-p)|}{(1-\eps^2)^2}$$ for any $n\geq 1$. That is, each $p_n/q_n$ is a best approximation of the second kind up to constants: if $rs\leq (1-\eps^2)^2/4\eps\mu^2$, then $0<|q|<r|q_n|$ implies $|qz-p|>s|q_nz-p_n|$ for any $p\in\cO$ except perhaps when $p/q$ is already a convergent.\end{theorem}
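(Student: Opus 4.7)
The theorem splits into two assertions: the displayed inequality and the ``best approximation up to constants'' restatement. The restatement is an algebraic rearrangement of the inequality---dividing through by $|q_n||q|$ and applying the hypothesis $|q|<r|q_n|$---so essentially all the content lies in the displayed bound.

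My plan is to observe that this bound is already implicit in earlier results, with both sides being sandwiched around the common quantity $\eps\mu/(1-\eps^2)$. First I bound the left-hand side: Theorem \ref{thm:main}(i) gives $|q_n(q_nz-p_n)|<\mu/((1-\eps^2)|z_n|)$, and combining this with $|z_n|\geq 1/\eps$ from the opening lemma of Subsection \ref{ss:conv} yields
\[|q_n(q_nz-p_n)|<\frac{\eps\mu}{1-\eps^2}.\]
Then I bound the right-hand side from below. The final claim of Lemma \ref{lem:combo} says that $|q(qz-p)|\leq (1-\eps^2)/(4\mu)$ forces $p/q$ to be a convergent; the contrapositive under our hypothesis is the strict inequality $|q(qz-p)|>(1-\eps^2)/(4\mu)$, which rearranges to
\[\frac{4\eps\mu^2|q(qz-p)|}{(1-\eps^2)^2}>\frac{\eps\mu}{1-\eps^2}.\]
Concatenating the two strict inequalities through the common midpoint delivers the displayed bound.

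There is no real obstacle here---the substance has already been absorbed into Theorem \ref{thm:main} and Lemma \ref{lem:combo}. The only fine point is that every inequality in sight must be strict, which they are: Theorem \ref{thm:main}(i) is a strict bound, and when $p/q$ is not a convergent the element $b\in\cO$ implicit in Lemma \ref{lem:combo} is nonzero, so $|b|\geq 1$ strictly, producing the strict lower bound above. Hence the full chain is strict as required, and the restatement then follows by the algebraic manipulation mentioned at the outset.
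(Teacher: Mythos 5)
Your proof is correct and is essentially the same as the paper's, just reorganized as a direct chain through the midpoint $\eps\mu/(1-\eps^2)$ rather than by contradiction: the paper assumes the displayed inequality fails, combines with Theorem \ref{thm:main}(i) and $|z_n|\geq 1/\eps$ to conclude $|q(qz-p)|<(1-\eps^2)/4\mu$, and invokes Lemma \ref{lem:combo} to contradict the hypothesis that $p/q$ is not a convergent; you simply take the contrapositive of that last step up front. The two uses of Theorem \ref{thm:main}(i) with $|z_n|\geq 1/\eps$ and of Lemma \ref{lem:combo}'s final claim are identical.
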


\begin{proof}Violating the stated inequality combines with Theorem \ref{thm:main} $i)$, giving $$|q(qz-p)|\leq \frac{(1-\eps^2)^2|q_n(q_nz-p_n)|}{4\eps\mu^2}< \frac{1-\eps^2}{4\mu}.$$ Thus $p/q$ is a convergent by Lemma \ref{lem:combo}.\end{proof}

\subsection{Continuants}

Here we study the growth of $|q_n|$, which may not be monotonic as the example in Subsection \ref{ss:example} demonstrates.

\begin{lemma}\label{lem:er}Suppose $B$ and $\eps$ are admissible and that $\mu\neq 1$ or $|\Delta|\neq 3$. Then $\eps\mu\geq 2/3$.\end{lemma}

\begin{proof}Let $z\in\bR$ approach $\eps$ from the right. Every nonzero element of $\cO$ has magnitude at least $1$, so no nonzero multiple of $z$ is within $\eps$ of $0$. In particular, the minimal nonzero multiple of $z$ that is within $\eps$ of any integer is $\lceil 1/\eps - 1\rceil z$. So $\mu\geq \lceil 1/\eps - 1\rceil$, which is at least $2/3\eps$ unless $\eps\in[1/2,2/3)$. In this range, $\eps\mu< 2/3$ would imply $\mu < 4/3$. This forces $B$ to consist of units since non-units in imaginary quadratic rings have magnitude at least $\sqrt{2}$. Thus the discs in Definition \ref{def:admit}'s union are centered on integers. But discs of radius $\eps\in[1/2,2/3)$ on integers only cover the plane when $\Delta=-3$.\end{proof}

\begin{theorem}\label{thm:growq}If $\,0\leq n'<n$, then $$|q_n|> \frac{(1-\eps^2)^2|q_{n'}z_{n'}|}{4\eps^{n-n'}\mu^2}.$$ In particular, if $n\geq 1$ then $|q_n|>(1-\eps^2)^2/4\eps^n\mu^2$.\end{theorem}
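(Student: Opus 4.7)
The plan is to argue by contradiction: assume $|q_n|\le (1-\eps^2)^2|q_{n'}z_{n'}|/(4\eps^{n-n'}\mu^2)$ for some $n>n'\geq 0$ and derive a contradiction.

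First I would combine Theorem \ref{thm:main}(i) at index $n'$, which gives $|q_{n'}z-p_{n'}|<\mu/((1-\eps^2)|q_{n'}z_{n'}|)$, with Proposition \ref{prop:mono} iterated $n-n'$ times to obtain $|q_nz-p_n|\leq\eps^{n-n'}|q_{n'}z-p_{n'}|$. Multiplying by the hypothesized bound on $|q_n|$ causes the $\eps^{n-n'}$, $\mu$, and $|q_{n'}z_{n'}|$ factors to telescope away, leaving $|q_n(q_nz-p_n)|<(1-\eps^2)/(4\mu)$.

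Next I apply the final sentence of Lemma \ref{lem:combo} to $(p,q)=(p_n,q_n)$. Since $|q_n(q_nz-p_n)|<(1-\eps^2)/(4\mu)$, the $|b|$-bound forces $b=0$, so $p_n/q_n=p_m/q_m$ for the index $m$ from the Lemma. Corollary \ref{cor:nzero} identifies this $m$ with $n$ (as the first occurrence of the ratio), and the minimality condition then forces $|q_{n+1}|\geq R:=\sqrt{|q_n|\mu/((1-\eps^2)|q_nz-p_n|)}$. Together with the bound from Step 1, this gives $R>2\mu|q_n|/(1-\eps^2)$, so in particular $|q_{n+1}|>|q_n|$ (since $2\mu/(1-\eps^2)>1$ by Proposition \ref{prop:er}).

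The crux, which I expect to be the hard part, is to extract the contradiction from these bounds. The plan is to use equation (8) with $|q_{n+1}|>|q_n|$ and $|z_{n+1}|\geq 1/\eps$ to bound $|q_{n+1}(q_nz-p_n)|$ both below (via the triangle inequality applied to $|1+q_n/(q_{n+1}z_{n+1})|$) and above (via Lemma \ref{lem:low} at index $n+1$), and then translate these into bounds on $|q_n(q_nz-p_n)|=(|q_n|/|q_{n+1}|)\cdot|q_{n+1}(q_nz-p_n)|$ using the lower bound $|q_{n+1}|\geq R$. The delicate point is that $R$'s definition intertwines $|q_n|$ and $|q_nz-p_n|$, so after substituting the lower bound for $|q_nz-p_n|$ coming from equation (8) back into the defining formula for $R$ and using $|q_{n+1}|\geq R$, a careful manipulation should force $|q_n(q_nz-p_n)|\geq(1-\eps^2)/(4\mu)$, contradicting Step 1.

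Finally, the ``in particular'' statement $|q_n|>(1-\eps^2)^2/(4\eps^n\mu^2)$ follows from the first inequality by setting $n'=1$: since $|q_1|=|b_1|\geq 1$ and $|z_1|\geq 1/\eps$, one has $|q_1z_1|\geq 1/\eps$, and the main inequality specializes to the claimed bound.
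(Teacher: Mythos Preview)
Your Steps 1 and 2 are correct and match the paper. The gap is in Step 3.

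The information you extract from Lemma~\ref{lem:combo} is only the \emph{lower} bound $|q_{n+1}|\geq R$. But everything you list---equation~(\ref{eq:8}), Lemma~\ref{lem:low} at index $n+1$, and the triangle inequality on $|1+q_n/(q_{n+1}z_{n+1})|$---is compatible with $|q_{n+1}|$ being arbitrarily large. Concretely: from $|q_{n+1}|\ge R$ you get $|q_{n+1}(q_nz-p_n)|\ge R\,|q_nz-p_n|=\sqrt{X\mu/(1-\eps^2)}$ where $X=|q_n(q_nz-p_n)|$, and combining with the upper bound $|q_{n+1}(q_nz-p_n)|<\mu/(1-\eps^2)$ from (\ref{eq:8}) and Lemma~\ref{lem:low} only yields $X<\mu/(1-\eps^2)$, which is weaker than what you already had. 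Going the other way, the lower bound on $|q_{n+1}(q_nz-p_n)|$ from the triangle inequality gives $|q_nz-p_n|>c/|q_{n+1}|$ for some constant $c$, but without an \emph{upper} bound on $|q_{n+1}|$ this cannot be turned into the lower bound $X\ge(1-\eps^2)/(4\mu)$ you are hoping for. No ``careful manipulation'' closes this; the inequalities simply point the wrong way.

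What you are discarding is the other half of minimality in Lemma~\ref{lem:combo}: since $m=n$ is the \emph{smallest} index with $|q_{m+1}|\ge R$, every earlier index fails it, so in particular $|q_{n-1}|<R$. The paper uses exactly this upper bound, together with the determinant identity $|p_nq_{n-1}-p_{n-1}q_n|=|b_n|\ge 1$, split via the triangle inequality as $|q_n(q_{n-1}z-p_{n-1})|+|q_{n-1}(q_nz-p_n)|$. The first summand is controlled by your Step~1 chain (one fewer factor of $\eps$), and the second by $|q_{n-1}|<R$, giving a sum strictly below $1$---the contradiction. So the fix is to look \emph{backward} to $q_{n-1}$ and the determinant, not forward to $q_{n+1}$.

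Two minor points: you need the case $n'=0$ dismissed (it is trivial since $q_0=0$); and the case $z=p_n/q_n$ must be handled separately, since then Lemma~\ref{lem:combo} does not apply as stated and there is no $q_{n+1}$---another reason the argument must go through $q_{n-1}$.
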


\begin{proof}Since $q_0=0$, we may assume that $n'\geq 1$. Suppose the first lower bound on $|q_n|$ is false. Then by Proposition \ref{prop:mono} and Theorem \ref{thm:main} $i)$, $$|q_n(q_nz-p_n)|<\eps^{n-n'}|q_n(q_{n'}z-p_{n'})|\leq \frac{(1-\eps^2)^2|q_{n'}z_{n'}(q_{n'}z-p_{n'})|}{4\mu^2}<\frac{1-\eps^2}{4\mu}.$$ Therefore Lemma \ref{lem:combo} applies, and either $p_n/q_n=z$ or $n=n''$, where $n''$ is the first index (recall convergents cannot repeat by Corollary \ref{cor:nzero}) for which $$|q_{n''+1}|\geq \sqrt{\frac{|q_n|\mu}{|q_nz-p_n|(1-\eps^2)}}.$$ Regarding the second possibility, $|q_{n-1}|$ must fail to satisfy the bound above in place of $|q_{n''+1}|$. Thus $$1\leq |p_nq_{n-1}-p_{n-1}q_n|\leq |q_n(p_{n-1}-q_{n-1}z)|+|q_{n-1}(q_nz-p_n)|$$ $$<\frac{\eps^{n-n'}|q_n(q_{n'}z-p_{n'})|}{\eps} + \sqrt{\frac{|q_n(q_nz-p_n)|\mu}{1-\eps^2}}< \frac{1-\eps^2}{4\eps\mu}+ \frac{1}{2}<1.$$ The last inequality uses Lemma \ref{lem:er}. The same contradiction occurs in the case $p_n/q_n=z$; we just get to replace the summand $|q_{n-1}(q_nz-p_n)|$ above with $0$.

Finally, $|q_n|>(1-\eps^2)^2/4\eps^n\mu^2$ uses $n'=1$ and $|q_{n'}z_{n'}|\geq 1\cdot 1/\eps$.\end{proof}

\begin{corollary}If $n\geq 1$, then $$\left|z-\frac{p_n}{q_n}\right|<\frac{4\eps^{2n}\mu^2}{(1-\eps^2)^2}.$$\end{corollary}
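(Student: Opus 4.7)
The plan is to bound $|z - p_n/q_n|$ by writing it as $|q_nz - p_n|/|q_n|$ and applying the two preceding results in the natural way. The numerator is controlled by monotonic convergence, the denominator by the near-monotonic growth of continuants.

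First I would invoke Proposition \ref{prop:mono}, which gives $|q_nz - p_n| \leq \eps^n$ directly. Next I would apply Theorem \ref{thm:growq} with $n' = 0$ to get $|q_n| > (1-\eps^2)^2/(4\eps^n\mu^2)$. Dividing these two bounds yields
$$\left|z - \frac{p_n}{q_n}\right| = \frac{|q_nz - p_n|}{|q_n|} < \frac{\eps^n \cdot 4\eps^n\mu^2}{(1-\eps^2)^2} = \frac{4\eps^{2n}\mu^2}{(1-\eps^2)^2},$$
as claimed.

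Since both ingredients are already proved in the excerpt, there is no real obstacle here — the corollary is essentially a one-line consequence, packaging the approximation quality and the continuant growth into a single explicit error estimate for $z - p_n/q_n$. The only minor care needed is to note that $q_n \neq 0$ for $n \geq 1$ (already established in Corollary \ref{cor:nzero}) so the division is legitimate.
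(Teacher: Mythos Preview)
Your proof is correct and matches the paper's own argument exactly: the paper's proof reads simply ``This uses $|q_n|>(1-\eps^2)^2/4\eps^n\mu^2$ and $|q_nz-p_n|\leq\eps^n$,'' which is precisely the combination of Proposition \ref{prop:mono} and Theorem \ref{thm:growq} that you invoke.
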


\begin{proof}This follows from $|q_n|>(1-\eps^2)^2/4\eps^n\mu^2$ and $|q_nz-p_n|\leq\eps^n$.\end{proof}

\subsection{Coefficients}\label{ss:coef}Here we show that the existence of an infinite, periodic sequence of coefficients for an input $z$ is equivalent to $[K(z):K]=2$, and that boundedness of coefficients is equivalent to $z$ being badly approximability. Both are true whether ``coefficient" is interpreted to mean $a_n$, $a_n/b_n$, or the pair $a_n,b_n$, and there is little difference made to the proofs.

\begin{definition}We call $z\in\bC$ \emph{badly approximable} if $|q(qz-p)|$ has a positive infimum over $p,q\in\cO$ with $q\neq 0$. Otherwise, $z$ is \emph{well approximable}.\end{definition}

To prove that bad approximability is equivalent to bounded coefficients, we need a lower-bound analogue of Theorem \ref{thm:main} $iii)$.

\begin{customlem}{3.6b}\label{lem:up}If $n\geq 1$ then $$\left|1 + \frac{q_{n-1}}{q_nz_n}\right| < \frac{4\eps^2\mu^2}{(1-\eps^2)^2}.$$\end{customlem}

\begin{proof}First we use Theorem \ref{thm:growq} with $n'=n-1$ to get $$|q_n|>\frac{(1-\eps^2)^2|q_{n-1}z_{n-1}|}{4\eps\mu^2}\geq \frac{(1-\eps^2)^2|q_{n-1}|}{4\eps^2\mu},$$ which bounds $|q_{n-1}/q_n|$ from above. Along with $1/|z_n|<\eps$ and the triangular inequality, this shows $$\left|1 + \frac{q_{n-1}}{q_nz_n}\right|< 1+\frac{4\eps^3\mu^2}{(1-\eps^2)^2}.$$ The right-hand side above is less than the stated bound precisely when $\eps\mu$ exceeds $(1-\eps^2)/2\sqrt{1-\eps}$, which attains a maximum of $2\sqrt{6}/9$ at $\eps=1/3$. By Lemma \ref{lem:er}, $\eps\mu\geq 2/3 > 2\sqrt{6}/9$.\end{proof}

\begin{customthm}{3.7b}\label{thm:mainb}If $n\geq 1$ then $|q_nz-p_n|$ is greater than $$i)\;\;\frac{(1-\eps^2)^2|b_n|}{4\eps^2\mu^2|q_nz_n|},\hspace{1cm}ii)\;\;\frac{(1-\eps^2)^2|b_{n+1}|}{4\eps^2\mu^2|q_{n+1}|},\hspace{0.5cm}\text{and}\hspace{0.5cm}iii)\;\;\frac{(1-\eps^2)^2|b_{n+1}|}{8\eps^2\mu^2|a_{n+1}q_n|}.$$\end{customthm}

\begin{proof}Lemma \ref{lem:up} combines with identities (\ref{eq:7}) and (\ref{eq:8}) to prove $i)$  and $ii)$ directly, just as in the proof of Theorem \ref{thm:main}. 

For $iii)$, $|b_{n+1}z_n-a_{n+1}|\leq\eps|b_n|$ implies $|b_{n+1}z_n|\leq |a_{n+1}|+\eps|b_n|$. Dividing both sides of this last inequality by $|a_{n+1}b_n|$ gives $$\left|\frac{b_{n+1}z_n}{a_{n+1}b_n}\right|\leq \frac{1}{|b_n|}+\frac{\eps}{|a_{n+1}|}\leq 2.$$ Now scale both ends of the inequality above by $|b_n/2z_n|$ to get the lower bound for $|b_n/z_n|$ that turns $i)$ into $iii)$.\end{proof}

\begin{corollary}\label{cor:bad}An input $z$ is badly approximable if and only $(a_n/b_n)_n$ is bounded.\end{corollary}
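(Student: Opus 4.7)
My plan is to sandwich $|q_n(q_nz-p_n)|$ between expressions involving $|a_{n+1}|$ using Theorem \ref{thm:main} $ii)$ on the upper side and Theorem \ref{thm:mainb} $ii)$ on the lower side, then translate control on $|q_n(q_nz-p_n)|$ into control on $|q(qz-p)|$ for arbitrary $p,q\in\cO$ via Theorem \ref{thm:best}.

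For the ``only if'' direction, assuming $z$ is badly approximable with $|q(qz-p)|\geq c>0$ for all nonzero $q\in\cO$, I would apply this to the convergent pairs $(p_n,q_n)$ and combine with Theorem \ref{thm:main} $ii)$ to get
$$c\;\leq\;|q_n(q_nz-p_n)|\;<\;\frac{(1+\eps^2)\mu^2}{(1-\eps^2)|a_{n+1}|},$$
which immediately bounds $|a_{n+1}|$ uniformly in $n$.

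For the ``if'' direction I would assume $|a_n|\leq A$ for all $n$. Since $B\subset\cO^{\times}$ is a finite nonempty set, the minimum $\beta:=\min_{b\in B}|b|$ is positive, so Theorem \ref{thm:mainb} $ii)$ gives the uniform lower bound $|q_n(q_nz-p_n)|>(1-\eps^2)^2\beta/(10\eps^3\mu^3 A)=:c_1$. To promote this to bad approximability I would split over $p,q\in\cO$ with $q\neq 0$: if $p/q$ is not a convergent, Theorem \ref{thm:best} applied at (say) $n=1$ gives $|q(qz-p)|$ bounded below by a fixed positive multiple of $c_1$; if $p/q=p_n/q_n$ for some $n$, then $(p,q)=\lambda(p_n,q_n)$ for some $\lambda\in K$ that makes the pair integral.

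The hard part is this last case. Reducedness of the column ideal $(p_n,q_n)=(M_n)_1$ (Definition \ref{def:red}) says it has minimal norm in its ideal class, while $\lambda(p_n,q_n)=(p,q)$ is another integral ideal in the same class, so $N(\lambda(p_n,q_n))\geq N((p_n,q_n))$ forces $|\lambda|\geq 1$ and hence $|q|\geq |q_n|$. This yields $|q(qz-p)|=(|q|/|q_n|)^2|q_n(q_nz-p_n)|\geq c_1$, and combined with the non-convergent case shows $\inf|q(qz-p)|>0$, i.e., $z$ is badly approximable.
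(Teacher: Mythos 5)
Your proposal is correct and follows essentially the same route as the paper: the forward direction is a direct application of Theorem \ref{thm:main}~$ii)$ exactly as in the text, and the reverse direction is the contrapositive of the paper's argument, combining Theorem \ref{thm:mainb}~$ii)$ (lower bound on $|q_n(q_nz-p_n)|$ when coefficients are bounded) with Theorem \ref{thm:best} (to control arbitrary $p,q$). The one thing worth noting is that you are more explicit than the paper about the case when $p/q$ coincides with a convergent $p_n/q_n$ but is not equal to the pair $(p_n,q_n)$: you correctly observe that reducedness of $(M_n)_1$ forces the scaling factor $\lambda$ to have $|\lambda|\geq 1$, so $|q(qz-p)|\geq|q_n(q_nz-p_n)|$. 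The paper's one-line proof leaves this implicit in the phrase ``sufficiently good approximations appear as convergents,'' so your version is a careful expansion of the same argument rather than a different one.
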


\begin{proof} If $z$ is badly approximable then $(a_n)_n$ (and thus $(a_n/b_n)_n$) is bounded by Theorem \ref{thm:main} $iii)$. If $z$ is well approximable then sufficiently good approximations appear as convergents by Theorem \ref{thm:best}, implying $(a_n/b_n)_n$ is unbounded by Theorem \ref{thm:mainb} $iii)$.\end{proof}

Finally, we prove the existence of periodic expansions of quadratic, irrational inputs.

\begin{theorem}\label{thm:quad}The set $\{z_n\}_n$ is finite if and only if $[K(z):K] \leq 2$. In particular, $z$ has a continued fraction expansion in which the sequence of pairs $(a_n,b_n)_n$ is eventually periodic and infinite if and only if $[K(z):K]=2$.\end{theorem}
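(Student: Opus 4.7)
The plan is to prove the main equivalence by a Lagrange-style argument bounding integer invariants of $z_n$, and then deduce the periodicity statement from it.

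For $[K(z):K]\leq 2 \Rightarrow \{z_n\}_n$ finite: when $z\in K$, Corollary~\ref{cor:term} forces termination. When $z$ is quadratic over $K$, write $f(X)=AX^2+BX+C$ with $A,B,C\in\cO$, $A\neq 0$, and $f(z)=0$. Substituting $z=M_n(z_n)=(p_nz_n+p_{n-1})/(q_nz_n+q_{n-1})$ and clearing $(q_nz_n+q_{n-1})^2$ yields $A_nz_n^2+B_nz_n+C_n=0$ with
$$A_n=q_n^2 f(p_n/q_n),\qquad C_n=q_{n-1}^2 f(p_{n-1}/q_{n-1}),\qquad B_n^2-4A_nC_n=(B^2-4AC)b_n^2$$
(the last from $\det M_n=(-1)^n b_n$ and the behaviour of discriminants under Möbius substitutions). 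Factoring $f(X)=A(X-z)(X-\bar z)$ gives $|A_n|=|A||q_nz-p_n||q_n\bar z-p_n|$, which is bounded by combining Proposition~\ref{prop:mono} with Theorem~\ref{thm:main}~$i)$; similarly $|C_n|$, and hence $|B_n|$ via the discriminant identity. Since $A_n,B_n,C_n\in\cO$ is a discrete lattice, only finitely many triples arise, each pinning $z_n$ down to one of two roots. Conversely, if $\{z_n\}_n$ is finite and the algorithm does not terminate, pigeonhole gives $z_n=z_m$ for some $n<m$, so $T=M_m M_n^{-1}\in\mathrm{GL}_2(K)$ fixes $z$. If $T$ were scalar, $M_m=\lambda M_n$ would give $p_m/q_m=p_n/q_n$, contradicting the first-occurrence property of Corollary~\ref{cor:nzero}. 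Hence $T$ is non-scalar and $z$ solves the nontrivial equation $T_{21}X^2+(T_{22}-T_{11})X-T_{12}=0$ over $K$, so $[K(z):K]\leq 2$.

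For the periodicity statement, assume first that $(a_n,b_n)_n$ is $P$-periodic from index $N$ with the algorithm infinite. Since $M_{n+1}=M_n S(a_{n+1}/b_n,b_{n+1}/b_n)$ depends only on the triple $(a_{n+1},b_{n+1},b_n)$, periodicity yields $M_{N+kP}=T^k M_N$ for $T=M_{N+P}M_N^{-1}$. Thus $p_{N+kP}/q_{N+kP}=T^k(p_N/q_N)$, and as $k\to\infty$ these convergents tend to $z$. Because $z\notin K$ (the algorithm did not terminate), $z$ is not the pole of the Möbius map $T$, so continuity and $T^{k+1}=T\circ T^k$ force $T(z)=z$. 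As before, $T$ is non-scalar, so $z$ satisfies a nontrivial quadratic over $K$; combined with $z\notin K$ this gives $[K(z):K]=2$. For the converse, the main equivalence gives $\{z_n\}_n$ finite and the algorithm infinite. I would argue that the enlarged state $(z_n,b_n,[(M_n)_1])$—lying in the finite product of $\{z_n\}_n$, $B$, and the ideal class group—must repeat, and that once the implementer detects a repetition they may deterministically replicate the coefficient choices made at the earlier occurrence, producing a periodic tail.

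The main obstacle is this last converse. The validity of a coefficient choice at step $n+1$ is governed not only by $z_n$ and $b_n$ (through line 4) but also by the reducedness of $(M_n S(a/b_n,b/b_n))_1$ (line 5), which a priori depends on $M_n$ itself rather than merely on $[(M_n)_1]$. One must therefore verify that this reducedness condition factors through the ideal class of $(M_n)_1$ together with $(a,b,b_n)$, e.g.\ by relating the new column ideal to $[(M_n)_1]\cdot[(a,b)]$ via the adjugate identity $M_n\,\mathrm{adj}(M_n)=b_n I$. Only after isolating this dependence can pigeonhole on the enlarged state be converted into a legitimate periodic implementation.
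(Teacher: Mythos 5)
Your forward direction (finite $\{z_n\}_n$ implies $[K(z):K]\leq 2$) tracks the paper's proof exactly: pigeonhole gives $z_n=z_m$, the transfer matrix $T=M_mM_n^{-1}$ fixes $z$ and cannot be scalar by the first-occurrence property in Corollary~\ref{cor:nzero}, so $z$ satisfies a nontrivial quadratic over $K$. Your reverse direction is a genuinely different and more classical route. The paper writes $z=(w+x)/y$ with $w,y,(x^2-w^2)/y\in\cO$, sets up explicit recursions for a triple $(w_n,x_n,y_n)$, and bounds $|y_n|$ and $|w_n|$ directly from line 4's disc condition; the author even remarks that this proof is ``overly complicated.'' Your Lagrange-style bound on the integer triple $(A_n,B_n,C_n)$, using $C_n=A_{n-1}$, the discriminant identity $B_n^2-4A_nC_n=(B^2-4AC)b_n^2$, and Theorem~\ref{thm:main}~$i)$ to control $|A_n|=|A|\,|q_nz-p_n|\,|q_n\bar z-p_n|$, is the cleaner and more recognizable argument, and the crucial ingredient (a bound on $|q_n(q_nz-p_n)|$ to dominate the $|q_n\bar z-p_n|$ factor) is the same one the paper leans on.

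The gap you flag in the periodicity converse is real, and the state $(z_n,b_n,[(M_n)_1])$ you propose is not fine enough to close it: reducedness of $(M_nS(a/b_n,b/b_n))_1$ does not factor through the ideal class of $(M_n)_1$. The paper instead pigeonholes on $(z_n,b_n,M_n\bmod b_n)$, which is still a finite state since $b_n$ ranges over $B$ and $\cO/b_n\cO$ is finite. Once $z_{n'}=z_n$, $b_{n'}=b_n$, and $M_{n'}\equiv M_n\bmod b_n$, the adjugate observation you gesture toward does the work: if $M_nS$ has entries in $\cO$, then so does $\adj(M_n)(M_nS)=(\det M_n)S=\pm b_nS$, and since $M_{n'}-M_n$ is $b_n$ times an integral matrix, $(M_{n'}-M_n)S$ is integral, hence $M_{n'}S$ is too (and symmetrically). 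Thus the same pairs $(a,b)$ are admissible at stages $n$ and $n'$, and the coefficient choices from $n+1,\dots,n'$ can be repeated forever. Replacing your ideal-class coordinate by the congruence class $M_n\bmod b_n$ closes your argument.
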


\begin{proof}If $\{z_n\}_n$ is finite and $(z_n)_n$ is not then there are distinct $n,n'\in\bN$ with $M^{-1}_n(z) = z_n = z_{n'} = M^{-1}_{n'}(z)$. By Corollary \ref{cor:nzero}, $M_n$ cannot be a scaled copy of $M_{n'}$. Thus $M_{n'}M^{-1}_n(z) = z$ shows that $z$ satisfies a quadratic (irreducible by Corollary \ref{cor:term}) polynomial with coefficients in $K$.

For the converse, suppose $[K(z):K] = 2$. Let $x$ denote the discriminant of an integral, quadratic polynomial of which $z$ is a root, and use the quadratic formula to write $z=(w + x)/y$ such that $w,y,(x^2-w^2)/y\in\cO$. Let us also get an expression for $z_n$ in terms of $w$, $x$ and $y$. The last equality below comes from rationalizing the denominator with respect to the quadratic irrational $x$. That is, we multiply numerator and denominator by the conjugate of the denominator. $$z_n=M_n^{-1}(z)=\frac{q_{n-1}z-p_{n-1}}{p_n-q_nz} = \frac{(q_{n-1}w-p_{n-1}y)+q_{n-1}x}{(p_ny-q_nw)-q_nx}=$$ \begin{equation}\frac{\big(q_{n-1}q_n(x^2-w^2)/y - p_{n-1}p_ny+w(p_nq_{n-1}+q_np_{n-1})\big)+(-1)^nb_nx}{\big((p_ny-q_nw)^2-q_n^2x^2\big)/y}.\label{eq:103}\end{equation} Let $x_n = (-1)^nb_nx$, giving $x_n^2\in\cO$. Define $w_n$ and $y_n$ using the expression above so that $(w_n+x_n)/y_n=z_n$. As $y$ divides $x^2-w^2$, $w_n$ and $y_n$ are seen from (\ref{eq:103}) to be integers. 

Now we find recursive expressions for $w_n$, $x_n$, and $y_n$ using the recursive formula for $z_n$ in Proposition \ref{prop:rels}. The final equality is again rationalizing the denominator. $$\frac{w_n+x_n}{y_n}=z_n=\frac{b_{n-1}}{b_nz_{n-1} - a_n} = \frac{b_{n-1}y_{n-1}}{b_n(w_{n-1}+x_{n-1})-a_ny_{n-1}}=$$ \begin{equation}\frac{\big((b_nw_{n-1}-a_ny_{n-1})/b_{n-1}\big)-b_nx_{n-1}/b_{n-1}}{\big((b_nw_{n-1}-a_ny_{n-1})^2-b_n^2x_{n-1}^2\big)/b_{n-1}^2y_{n-1}}.\label{eq:104}\end{equation} Note that $-b_nx_{n-1}/b_{n-1} = -b_n\big((-1)^{n-1}b_{n-1}x\big)/b_{n-1} = (-1)^nb_nx$, thus matching a pair of terms from (\ref{eq:103}) and (\ref{eq:104}). But $\{1,x\}$ is a basis for the field extension $K(z)/K$. So because the remaining terms in (\ref{eq:103}) and (\ref{eq:104}) belong to $K$, there is only one way they could match up: $$w_n=\frac{b_nw_{n-1}-a_ny_{n-1}}{b_{n-1}}\hspace{0.5cm}\text{and}\hspace{0.5cm}y_n=\frac{(b_nw_{n-1}-a_ny_{n-1})^2-b_n^2x_{n-1}^2}{b_{n-1}^2y_{n-1}}.$$ These recursive formulas for $w_n$ and $x_n$ give the second equality below: $$\frac{1}{\eps}\left|\frac{y_n}{y_{n-1}}\right|\leq\left|\frac{y_nz_n}{y_{n-1}}\right| = \left|\frac{w_n+x_n}{y_{n-1}}\right|=\left|\frac{b_nz_{n-1}-a_n}{b_{n-1}}+\frac{(-1)^n2b_nx}{y_{n-1}}\right|\leq\eps+\frac{2\mu|x|}{|y_{n-1}|}.$$ Thus $|y_n|\leq\eps^2|y_{n-1}|+2\eps\mu|x|$, implying $(y_n)_n$ is a bounded sequence. Therefore $$|w_n| = \left|\frac{b_nw_{n-1}-a_ny_{n-1}}{b_{n-1}}\right|=\left|\frac{y_{n-1}(b_nz_{n-1}-a_n)}{b_{n-1}}+(-1)^nb_nx\right|\leq \eps|y_{n-1}|+\mu|x|$$ shows that $(w_n)_n$ is also bounded. Since $w_n$, $x_n^2$, and $y_n$ are all bounded integers, $\{z_n\}_n=\{(w_n+x_n)/y_n\}_n$ is finite.

To see why the final periodicity claim follows, fix an expansion of a quadratic irrational $z$. By finiteness of $B$ and $\{z_n\}_n$, there are indices $n'> n$ with $z_{n'}=z_n$, $b_{n'}=b_n$, and $M_{n'}\equiv M_n\,\text{mod}\,b_n$. For any matrix $S$, if either of $M_nS$ or $M_{n'}S$ has integer entries then $b_nS=\det M_nS=\det M_{n'}S$ does too, implying both $M_nS$ and $M_{n'}S$ have integer entries. This shows that $(M_nS)_{\ell}$ is $\eps$-reduced if and only if $(M_{n'}S)_{\ell}$ is $\eps$-reduced. Thus we may take $a_{k'}=a_k$ and $b_{k'}=b_k$ for all $k'>n'$, where $k'\equiv k\,\text{mod}\,(n'-n)$ for $n<k\leq n'$.\end{proof}

We remark that aside from being overly complicated, the proof of Theorem \ref{thm:quad} applies to the continued fractions produced when Algorithm \ref{alg:1} is executed over $\bZ$. The author is not aware of such a perspective (absent of a fixed convention for selecting among multiple coefficients) in the literature. Even with $B=\{1\}$, there are overlapping discs (intervals in this case) that allow for an infinite number of periodic continued fraction expansions, all of which converge to the given quadratic, irrational input. By taking $B\neq\{1\}$, our algorithm finds the additional use over $\bZ$ of producing even more such expansions. It is possible, however, that these expansions are already obtainable by altering nearest integer coefficients with the processes of singularization and expansion \cite{kraaikamp}.

\begin{figure}[ht]
   \begin{minipage}{0.49\textwidth}
     \centering
     \begin{overpic}[height=7.5cm, clip, trim=2cm 0cm 2cm 0cm,unit=1mm]{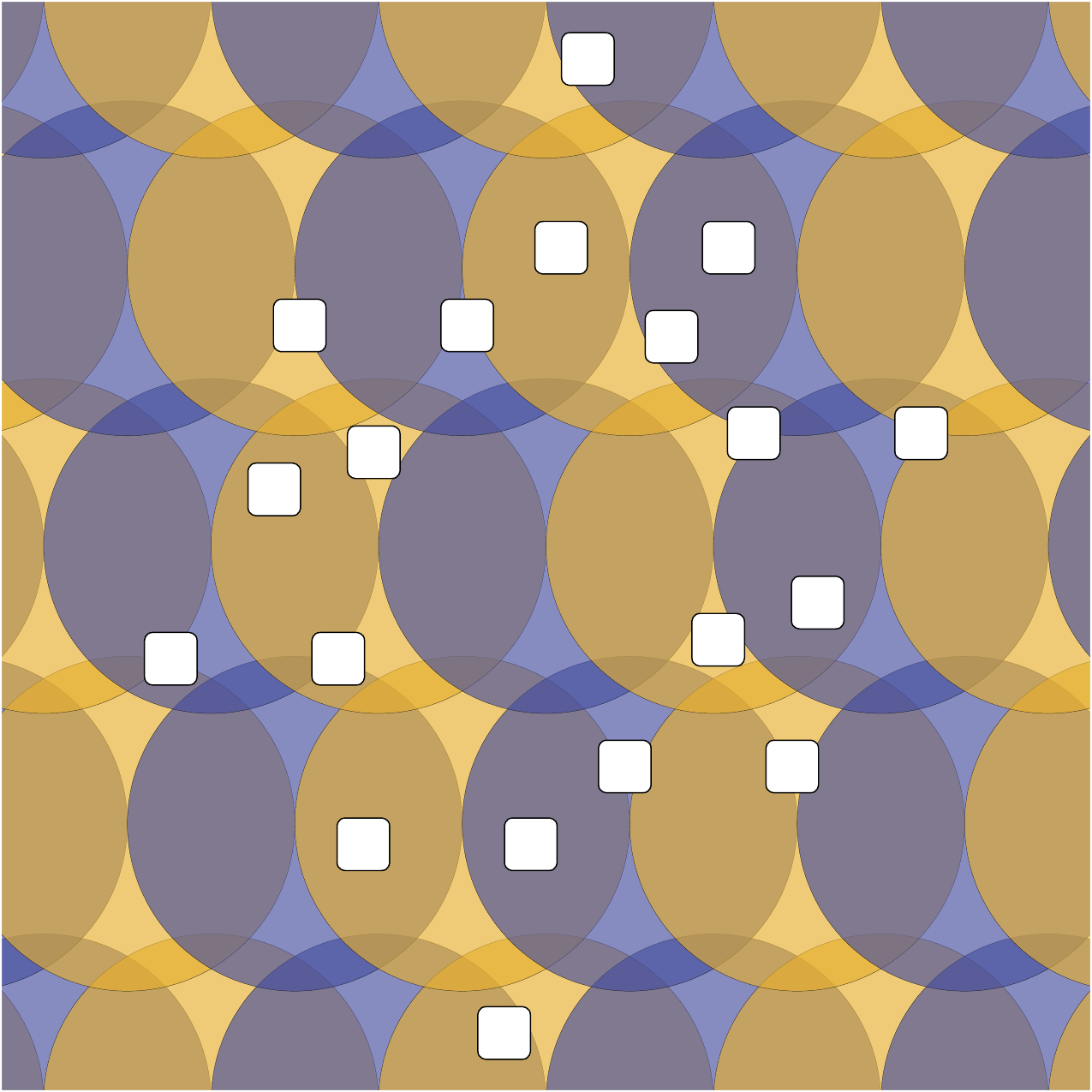}
    \put(38.4,50.8){\small$0$}
    \put(18,42.9){\small$1$}
    \put(41.15,30){\small\minus$1$}
    \put(42.3,56.9){\small$7$}
    \put(16.6,16){\small\minus$7$}
    \put(28.7,16){\small$2$}
    \put(30.25,57){\small\minus$2$}
    \put(48.4,32.6){\small$3$}
    \put(10.5,40.4){\small\minus$3$}
    \put(4,28.7){\small$8$}
    \put(55.0,44.25){\small\minus$8$}
    \put(44,44.2){\small$4$}
    \put(14.9,28.7){\small\minus$4$}
    \put(12.9,51.6){\small$9$}
    \put(46.05,21.3){\small\minus$9$}
    \put(24.4,51.6){\small$5$}
    \put(34.65,21.3){\small\minus$5$}
    \put(32.65,69.85){\small$6$}
    \put(26.35,3){\small\minus$6$}
    \end{overpic}
   \end{minipage}\hfill
   \begin{minipage}{0.468\textwidth}
     \centering
     \vspace{-0.21cm}
     \begin{tikzpicture}[scale = 0.82]
    \draw(3,2.5,0) node (0) {0};
    \draw(5,4.4,0) node (1) {7};
    \draw(6.231,2.35,0) node (2) {1};
    \draw(3.2,7.6,0) node (4) {3};
    \draw(2.6,5.7,0) node (3) {8};
    \draw(6.231,5.85,0) node (5) {2};
    \draw(0.169,2.35,0) node (6) {5};
    \draw(1.4,3.8,0) node (7) {9};
    \draw(0.169,5.85,0) node (8) {4};
    \draw(3.2,0.6,0) node (9) {6};
    \draw [bend right=20,-{Stealth[round,scale=1.5]},daisy] (2) to (5);
    \draw [bend right=20,-{Stealth[round,scale=1.5]},pacific] (9) to (2);
    \draw [bend right=20,-{Stealth[round,scale=1.5]},pacific] (0) to (2);
    \draw [bend right=20,-{Stealth[round,scale=1.5]},daisy] (0) to (1);
    \draw [bend right=20,-{Stealth[round,scale=1.5]},daisy] (9) to (1);
    \draw [bend right=20,-{Stealth[round,scale=1.5]},pacific] (1) to (3);
    \draw [bend right=20,-{Stealth[round,scale=1.5]},pacific] (3) to (6);
    \draw [bend right=20,-{Stealth[round,scale=1.5]},pacific] (6) to (9);
    \draw [bend right=20,-{Stealth[round,scale=1.5]},pacific] (4) to (7);
    \draw [bend right=20,-{Stealth[round,scale=1.5]},daisy] (1) to (4);
    \draw [bend right=20,-{Stealth[round,scale=1.5]},daisy] (4) to (8);
    \draw [bend right=20,-{Stealth[round,scale=1.5]},daisy] (7) to (9);
    \draw [bend right=20,-{Stealth[round,scale=1.5]},daisy, dashed] (5) to (3);
    \draw [bend right=20,-{Stealth[round,scale=1.5]}, dashed,daisy] (8) to (6);
    \draw [bend right=20,-{Stealth[round,scale=1.5]}, dashed,pacific] (5) to (4);
    \end{tikzpicture}
    \captionsetup{width=1\textwidth,aboveskip=0.6\baselineskip}
    \caption{Left: $\{z_n\}_n$ for input $z_0=(3+5i)/4$ and $|\Delta|=11$. Right: result from using the disc of indicated color.}\label{fig:5}
   \end{minipage}
\end{figure}

Figure \ref{fig:5} shows $\{z_n\}_n$ for $z = (3+5i)/4$ using $B=\{1\}$ in $\bQ(\sqrt{-11})$. The covering is centered at $0\in\bC$, and $z$ is labeled ``0." As it lies in both the yellow disc centered at $(1+\sqrt{-11})/2$ and the blue disc centered at $(3+\sqrt{-11})/2$, there are two possibilities for $a_1$. The resulting values of $z_1$ are indicated by the yellow and blue arrows to 1 and 7 in the diagram, and are labeled ``1" and ``7" in the image. In particular, numbers in the figure need not correspond to the stage number at which a point might appear.

It is not generally the case that if $z_n$ appears in a continued fraction expansion then $-z_n$ will also appear, but it does happen in this example for every point except $z_0=z$. This fact has been used to cut the number of nodes needed for our diagram in half. Dashed arrows indicate a sign switch. For example, consider the point labeled ``$-4$," which is only contained in the disc centered on $-1$. Its image under $z\mapsto 1/(z-(-1))$ is ``$5$," not ``$-5$." Similarly, ``2" is mapped by $z\mapsto1/(z-(1-\sqrt{-11})/2)$ to ``$-3$."

\section{Admissible parameters}\label{sec:4}

The coverings provided by Definition \ref{def:admit} are indexed by ideals rather than the matrices used in line 5. Let us check that our notion of admissibility is sufficient for Algorithm \ref{alg:1} to function, meaning $a$ and $b$ satisfying lines 4 and 5 always exist.

\begin{lemma}\label{lem:ideal}For $M,M'\in\emph{GL}_2(K)$, if $M$ is integral then $(MM')_{\ell}\supseteq\det M(M')_{\ell}$.\end{lemma}

\begin{proof}The adjugate of $M$, $\text{adj}\,M=M^{-1}\det M$, is also integral. So $\det M(M')_{\ell}=((\text{adj}\,M)MM')_{\ell}\subseteq(MM')_{\ell}$.\end{proof}

\begin{proposition}\label{prop:works}The \emph{\textbf{if}} condition in line 5 of Algorithm \ref{alg:1} is satisfied at least once every outer \emph{\textbf{for}} loop iteration.\end{proposition}

\begin{proof}Let $M$ with $\det M=b'\in B$ be a matrix that occurs in the execution of Algorithm \ref{alg:1}. Since $M$ is either the identity or the matrix product from line 5 in the previous outer \textbf{for} loop iteration, $(M)_{\ell}$ is $\eps$-reduced. Denote this ideal $\fb$, as that is the role it will play in Definition \ref{def:admit}. 

Fix any $a'$ in the fractional ideal $\fb^{-1}$ that makes $(MS(a',1))_{\ell}=b'\fb^{-1}$. According to Definition \ref{def:admit}, for any $z\in\bC$ there exist $a\in K$ and $b\in B$ for which $(a\fb,b\fb^{-1})$ is integral and $\eps$-reduced and $(M^{-1}(z)-a')/b'\in D(a/b,\eps/|b|)$. This disc containment rearranges to $ab'+a'b\in D(bM^{-1}(z),\eps|b'|)$ as in line 4. Note that $b',b\in\fb$ implies $ab'+a'b\in\cO$. So our proposed choice of coefficient pair is $ab'+a'b$ and $b$. It remains only to check that $(MS((ab'+a'b)/b',b/b'))_{\ell}$ is $\eps$-reduced. We claim that this ideal is exactly $(a\fb,b\fb^{-1})$, which would complete the proof. 

Let $\fp$ be an unramified prime in the ideal class of $\fb$ that does not divide $a\overline{b}\fb$, and let $k\in K$ be a generator for $\fp\fb^{-1}$. Consider the product \begin{equation}M\begin{bmatrix}1 & a' \\ 0 & 1\end{bmatrix}\begin{bmatrix}k & 0 \\ 0 & \|\fp\|/kb'\end{bmatrix}\begin{bmatrix}\|\fp\|/kb' & 0 \\ 0 & k\end{bmatrix}S(ab',b) = \frac{\|\fp\|}{b'}MS(ab'+a'b,b).\label{eq:102}\end{equation} We chose $a'$ so that the product of the first two matrices has right-column ideal $b'\fb^{-1}$ and left-column ideal $\fb$. In particular, the product of the first three matrices, call this $M'$, has determinant $\|\fp\|$, with $(M')_{\ell}=\fp$ and $(M')_r=\overline{\fp}$. The product of the last two matrices on the left-hand side has top-left entry $a\|\fp\|/k$, which generates $a\fb\overline{\fp}$, and bottom-left entry $bk$, which generates $b\fp\fb^{-1}$. Thus the left-column ideal of the product of all five matrices is contained in $((M')_{\ell}\hspace{0.02cm}a\fb\overline{\fp},(M')_rb\fp\fb^{-1})=\|\fp\|(a\fb,b\fb^{-1})$. On the other hand, since $M'$ is integral, Lemma \ref{lem:ideal} says that the left-column ideal of the overall product contains $\det M'(a\|\fp\|/k,bk) = \|\fp\|(a\fb\overline{\fp},b\fp\fb^{-1})$. Since $\fp$ is coprime to $a\fb\overline{\fp}$ and $\overline{\fp}$ is coprime to $b\fp\fb^{-1}$, we have $\|\fp\|(a\fb\overline{\fp},b\fp\fb^{-1})=\|\fp\|(a\fb,b\fb^{-1})$, which must then equal the left-column ideal of the overall product. Comparing to the right-hand side of (\ref{eq:102}) proves our claim.\end{proof}

\subsection{Generic admissible sets}\label{ss:admit} The following result confirms the main assertion of this paper: Algorithm \ref{alg:1} works in any imaginary quadratic field. The admissible set $\big\{1,2,...,\big\lfloor\sqrt{|\Delta|}\big\rfloor\big\}$ is highlighted because it makes decent constants in the previous section's results. A few such constants can be seen in Theorems \hyperref[thm:1]{1.2}, \hyperref[thm:2]{1.3}, and \hyperref[thm:3]{1.4}.

\begin{theorem}\label{thm:ints}If $\mu\in\bZ$ with $\mu \geq\big\lfloor\!\sqrt{|\Delta|/3}\big\rfloor$, then $\{1,2,...,\mu\}$ is admissible with \begin{equation}\eps = \frac{1}{2}\sqrt{1+\frac{|\Delta|}{(\mu+1)^2}}\label{eq:106}\end{equation} provided $2\eps^2\mu < \sqrt{|\Delta|}$. In particular, $\big\{1,2,...,\big\lfloor\sqrt{|\Delta|}\big\rfloor\big\}$ is admissible with $ 1/\sqrt{2}$.\end{theorem}

\begin{proof}Fix $z\in\bC$ and $\fb\subseteq\cO$ with $\fb\cap B\neq \emptyset$. We will show that $z$ is contained in Definition \ref{def:admit}'s union.

Let $b_1\in \fb\cap B$ be minimal, implying the integral ideal $b_1\fb^{-1}$ has no nontrivial rational divisors. By Dirichlet's approximation lemma, the real number $2b_1^2\Im(z)/\sqrt{|\Delta|}$ admits a rational approximation $a_2/b_2$ for coprime $a_2,b_2\in\bZ$ with $1\leq b_2\leq\lfloor \mu/b_1\rfloor$ and \begin{equation}\left|\frac{2b_1^2\Im(z)}{\sqrt{|\Delta|}}-\frac{a_2}{b_2}\right| \leq \frac{1}{b_2(\lfloor\mu/b_1\rfloor + 1)}.\label{eq:107}\end{equation} Scaling both sides above by $b_2\sqrt{|\Delta|}/2b_1$ is the first inequality below; the second inequality uses the fact that $\mu-b_1\lfloor\mu/b_1\rfloor$, which is a rational integer less than $b_1$, is at most $b_1-1$: 

$$\left|b_1b_2\Im(z)-\frac{a_2\sqrt{|\Delta|}}{2b_1}\right| \leq \frac{\sqrt{|\Delta|}}{2b_1(\lfloor\mu/b_1\rfloor + 1)}\leq \frac{\sqrt{|\Delta|}}{2(\mu+1)}.$$

Since $b_1\fb^{-1}$ has no nontrivial rational divisors, there is a congruence class modulo $2\|b_1\fb^{-1}\|$ (in $\bZ$) whose elements, call one $a_1$, satisfy $(a_1+a_2\sqrt{|\Delta|})/2\in b_1\fb^{-1}$. Choose such an $a_1$ nearest to $2b_1^2b_2\Re(z)$. Then \begin{equation}\left|b_1b_2z-\frac{a_1+a_2\sqrt{\Delta}}{2b_1}\right|\leq\sqrt{\frac{\|b_1\fb^{-1}\|^2}{4b_1^2}+\frac{|\Delta|}{4(\mu+1)^2}}\leq \eps.\label{eq:105}\end{equation} Thus $z\in D(a/b,\eps/|b|)$, where $a = (a_1 + a_2\sqrt{\Delta})/2b_1$ and $b = b_1b_2\in B$.

Assume by way of contradiction that $(a\fb,b\fb^{-1})$ is not $\eps$-reduced. From Definition \ref{def:reduce}, this means $k(a\fb,b\fb^{-1})\subseteq\cO$ for some $k\in K$ with $0 < |k|\leq \eps^2$. Then the assumption $2\eps^2\mu < \sqrt{|\Delta|}$ gives $|k|b\leq \eps^2\mu < \sqrt{|\Delta|}/2$. Thus $kb\in \bZ$ because a non-rational integer must have magnitude at least $\sqrt{|\Delta|}/2$. By writing $k\in\bQ$ in lowest terms, we see that $k(a\fb,b\fb^{-1})\subseteq\cO$ implies $(a\fb,b\fb^{-1})$ has a nontrivial rational divisor. Finally, $a\fb = (\fb/b_1)(a_1+a_2\sqrt{|\Delta|})/2\ni (a_1+a_2\sqrt{|\Delta|})/2$ shows that a rational divisor of $a\fb$ must divide $a_2$, and $b\fb^{-1} = (b_1\fb^{-1})b_2$ shows that a rational divisor of $b\fb^{-1}$ must divide $b_2$ (recalling that $b_1\fb^{-1}$ has no nontrivial rational divisors). But $a_2$ and $b_2$ are coprime.\end{proof}

The second paragraphs in the proofs of Proposition \ref{prop:works} and Theorem \ref{thm:ints} are algorithmic in nature. Combining them gives a fast method for finding coefficients.

\begin{algorithm}[H]\caption{Subroutine for Algorithm \ref{alg:1} to find coefficients ($a_n$ and $b_n$) under the parameters $B$ and $\eps$ defined in Theorem \ref{thm:ints}.}\label{alg:2}
\begin{flushleft}
\hspace*{\algorithmicindent}\textbf{input:} $b'$, $B$, $M$, $z$ used in line 4 of Algorithm \ref{alg:1}; also let $\fb$ denote $(M)_{\ell}$\\
\hspace*{\algorithmicindent}\textbf{output:} coefficient pair satisfying lines 4 and 5 of Algorithm \ref{alg:1}. 
\end{flushleft}
\begin{algorithmic}[1]
    \State $a'\gets$ number with $(MS(a',1))_{\ell}=b'\fb^{-1}$
    \State $b_1\gets$ minimum element of $\fb\cap B$\Top{$b'=\det M$ makes $\fb\cap B\neq\emptyset$}
    \State $z\gets b_1^2(M^{-1}(z)-a')/b'$
    \State $a_2/b_2\gets$ last convergent of $2\Im(z)/\sqrt{|\Delta|}$\Longtop{from classic floor function con-}
    \Statex that makes $b=b_1b_2\in B$\Bottom{tinued fractions over $\bZ$}
    \State $a_1\gets$ nearest rational integer to $2b_2\Re(z)$
    \Statex such that $(a_1+a_2\sqrt{|\Delta|})/2b_1\in\fb^{-1}$
    \State $a\gets (a_1+a_2\sqrt{|\Delta|})/2b_1$
    \State \Return $ab'+a'b$, $b$
\end{algorithmic}
\end{algorithm}

Let us collect notation that has been used thus far. Both Algorithm \ref{alg:2} and the proof of Proposition \ref{prop:works} begin with $b'=\det M$ and $\fb=(M)_{\ell}$ (which is $(M_{n-1})_{\ell}=(p_{n-1},q_{n-1})$ if Algorithm \ref{alg:2} is returning $a_n$ and $b_n$). Both also use $ab'+a'b$ and $b$ to denote the coefficient pair returned to Algorithm \ref{alg:1}. Then Algorithm \ref{alg:2} and the proof of Theorem \ref{thm:ints} further break these variable down into $a=(a_1+a_2\sqrt{|\Delta|})/2b_1$ and $b=b_1b_2$. This notation is used throughout the remainder of the subsection.

\begin{corollary}\label{cor:works}Under parameters $B$ and $\eps$ as defined in Theorem \ref{thm:ints}, the coefficients produced by Algorithm \ref{alg:2} satisfy lines 4 and 5 of Algorithm \ref{alg:1}.\end{corollary}

\begin{proof}Algorithm \ref{alg:2} is pseudocode for the second paragraph in the proof of Theorem \ref{thm:ints}. The only difference is that ``$z$" in Theorem \ref{thm:ints} is $(M^{-1}(z)-a')/b'$ in Algorithm \ref{alg:2}. This substitution for $z$ makes (\ref{eq:105}) equivalent to $ab'+a'b\in D(bM^{-1}(z),\eps|b'|)$ as required by line 4 of Algorithm \ref{alg:1}. 

Regarding line 4 of Algorithm \ref{alg:2}, floor function continued fractions return \textit{all} $a_2/b_2$  for which $|b_2(2\Im(z)/\sqrt{|\Delta|})-a_2|$ is minimal without increasing $b_2$. So because there exists an approximation satisfying (\ref{eq:107}), line 4 of Algorithm \ref{alg:2} finds it.

In the proof of Proposition \ref{prop:works} we saw that $(MS((ab'+a'b)/b',b/b'))_{\ell}=(a\fb,b\fb^{-1})$. The third paragraph of the proof of Theorem \ref{thm:ints} shows that $(a\fb,b\fb^{-1})$ is $\eps$-reduced. Thus the coefficients returned by Algorithm \ref{alg:2} satisfy line 5 of Algorithm \ref{alg:1}.\end{proof}

As discussed in Section \ref{sec:1}, two advantages to using Algorithm \ref{alg:2} are speed and control over the divisors of $(p_{n-1},q_{n-1},p_n,q_n)$.

\begin{theorem}\label{thm:coprime}If $a_{n-1}$, $b_{n-1}$, $a_n$, and $b_n$ are found using Algorithm \ref{alg:2}, then only ramified, non-rational primes divide $(p_{n-1},q_{n-1},p_n,q_n)$.\end{theorem}

\begin{proof}The last paragraph of the proof of Theorem \ref{thm:ints} shows that $(M_{n-1})_{\ell}=\fb$ has no nontrivial rational divisors if $a_{n-1}$ and $b_{n-1}$ are chosen using Algorithm \ref{alg:2}. Therefore $b_1$ defined in line 2 is just $\|\fb\|$. We have seen that $(p_n,q_n)=(a\fb,b\fb^{-1})$, so the goal is to verify that $(\fb,a\fb,b\fb^{-1})$ has no split prime divisors. 

Suppose $\fp$ divides $\fb$ but $\overline{\fp}$ does not. Then $\fp$ divides $b\fb^{-1} = b_2\overline{\fb}$ if and only if it divides $b_2$. Similarly, $\fp$ divides $a\fb = (a_1+a_2\sqrt{\Delta})/2\overline{\fb}$ if and only if it divides $(a_1+a_2\sqrt{\Delta})/2$. But this is true if and only if $\|\fp\|$ divides $a_2$, because the choice of $a_1$ in line 5 gives $(a_1+a_2\sqrt{\Delta})/2\in\overline{\fb}\subseteq\overline{\fp}$. Thus $\text{gcd}(a_2,b_2)=1$ implies $\fp$ does not divide $(a\fb,b\fb^{-1})$ as desired.\end{proof}

We gauge the efficiency of Algorithms \ref{alg:1} and \ref{alg:2} by time required to find $p,q\in\cO$ with $q\neq 0$ satisfying $|qz-p|<1/\delta$ for some approximation quality goal $\delta$. To make input length well-defined, the usual $z\in\bC$ is replaced with $z\in K$. Then $\log|wxy\Delta|$ can be taken as the input length of $z=(w+x\sqrt{\Delta})/y$, where $w,x,y\in\bZ$ and $\text{gcd}(w,x,y)=1$. 

\begin{theorem}\label{thm:timebest}Let $z$ have input length $\ell$ and let $\delta \geq 2$. By using Algorithm \ref{alg:2} to compute coefficients, Algorithm \ref{alg:1} can be executed in $O(\log|\Delta|\log_{\eps}\!\delta)$ operations on integers of length $O(\ell+\log \delta|\Delta|)$ to find $p,q\in \cO$ with $q\neq 0$ and $|qz-p|\leq 1/\delta$.\end{theorem}

\begin{proof}To achieve $|qz-p|\leq 1/\delta$, at most $\lceil \log _{1/\eps}\delta\rceil$ outer \textbf{for} loop iterations are needed by Proposition \ref{prop:mono}. Let us determine the cost of Algorithm \ref{alg:2}.

Fix inputs for Algorithm \ref{alg:2}. Consider the four-element generating set (over $\bZ$) for $\fb=(M)_{\ell}$ consisting of the products of left-column entries of $M$ with a $\bZ$-basis for $\cO$. Reduce them $\text{mod}\,b'$. By computing greatest common divisors among the rational integers defining the real parts and imaginary parts of these four generators, it is straightforward to reduce our set to a $\bZ$-basis for $\fb$ in $O(\log b')$ operations. Once we have this basis, line 1 is a matter of solving a system of two inhomogeneous congruences, which also requires $O(\log b')$ operations for the Euclidean algorithm. 

As we are assuming Algorithm \ref{alg:2} was used on the previous \textbf{for} loop iteration, $\fb$ has no nontrivial rational divisors. Thus $b_1=\|\fb\|$, the determinant of the two-by-two matrix with columns coming from our two-element $\bZ$-basis for $\fb$. So lines 2 and 3 both take $O(1)$ operations. 

Using classical continued fractions for line 4 requires $O(\log(\mu/b_1))$ operations.

Line 5 requires computing the appropriate congruence class for $a_1\,\text{mod}\,2\|\fb\|$, since $(a_1+a_2\sqrt{|\Delta|})/2\in b_1\fb^{-1} = \overline{\fb}$. (The factor of $2$ in the modulus comes from the general constraint $a_1\equiv a_2\Delta\,\text{mod}\,2$.) This requires $O(\log\|\fb\|)$ operations.

So at worst, a line in Algorithm \ref{alg:2} requires $O(\log\mu)$ operations. The inequality $2\eps^2\mu<\sqrt{|\Delta|}$ from Theorem \ref{thm:ints} along with the definition of $\eps$ in (\ref{eq:106}) imply $\mu=O(\sqrt{|\Delta|})$, giving the desired overall bound on operations of $O(\log|\Delta|\log_{\eps}\!\delta)$. 

We turn to the bound on integer lengths. Let $n$ be the first index for which $|q_nz-p_n|\leq 1/\delta$. For $n'\leq n$, Theorem \ref{thm:main} shows $|a_{n'}|= O(\delta\mu^2)$, $|q_{n'}|=O(\delta\mu)$, and $|z_{n'}|=O(\delta\mu)$ (except possibly $z_0$). Using $|q_{n'}z-p_{n'}|< 1$ shows that $|p_{n'}| = O(\delta\mu|z|)$. And finally, $b_{n'}\leq \mu$. Computations involve a few of these variables within each stage. Since the input length of an integer is determined by the logarithm of its magnitude, $\mu=O(\sqrt{|\Delta|})$ completes the proof.\end{proof} 

\subsection{Precomputing admissible sets}\label{ss:precomp}Table \ref{table:2} shows admissible sets for $|\Delta| <50$ with their minimal $\eps$-values. For each field we include the smallest admissible set $B$ (measured by $\mu$), as well as the next smallest set which decreases the corresponding value of $\eps$. We use $\tau$ to denote $\sqrt{\Delta}/2$ or $(1+\sqrt{\Delta})/2$ according to the parity of $\Delta$.

\setlength\tabcolsep{0.18cm}
\renewcommand{\arraystretch}{1.2}

\begin{table}[ht]
\setlength{\abovecaptionskip}{-0.4\baselineskip}
\setlength{\belowcaptionskip}{0.4\baselineskip}
\noindent\begin{minipage}{0.46\textwidth}
\begin{table}[H]
\raggedright
\hspace{0.36cm}\begin{tabular}{|c|c|c|}
\hline\rowcolor{Gray}
$|\Delta|$ & $B$ & $\eps^2$\\ \hline

3 & \parbox{1.2cm}{\centering \vspace{0.1cm} 
$1$\\ $1,1+\tau$\\ 
\vspace{0.05cm}} & \parbox{2.25cm}{\centering \vspace{0.1cm} 
$1/3$\\ $1/4$\\ 
\vspace{0.05cm}}\\ \hline
4 & \parbox{1.35cm}{\centering \vspace{0.1cm} 
$1$\\ $1,1+\tau$\\ 
\vspace{0.05cm}} & \parbox{2.25cm}{\centering \vspace{0.1cm} 
$1/2$\\ $2-\sqrt{3}$\\ 
\vspace{0.05cm}}\\ \hline
7 & \parbox{1.35cm}{\centering \vspace{0.1cm} 
$1$\\ $1,\tau$\\ 
\vspace{0.05cm}} & \parbox{2.25cm}{\centering \vspace{0.1cm} 
$4/7$\\ $1/2$\\ 
\vspace{0.05cm}}\\ \hline
8 & \parbox{1.35cm}{\centering \vspace{0.1cm} 
$1$\\ $1,\tau$\\ 
\vspace{0.05cm}} & \parbox{2.25cm}{\centering \vspace{0.1cm} 
$3/4$\\ $3-\sqrt{6}$\\ 
\vspace{0.05cm}}\\ \hline
11 & \parbox{1.35cm}{\centering \vspace{0.1cm} 
$1$\\ $1,\tau$\\ 
\vspace{0.05cm}} & \parbox{2.25cm}{\centering \vspace{0.1cm} 
$9/11$\\ $3/4$\\ 
\vspace{0.05cm}}\\ \hline
15 & \parbox{1.35cm}{\centering \vspace{0.1cm} 
$1,2$\\ $1,2,1+\tau$\\ 
\vspace{0.05cm}} & \parbox{2.25cm}{\centering \vspace{0.1cm}
$2/3$ \\ $(6-2\sqrt{5})/3$\\
\vspace{0.05cm}}\\ \hline
19 & \parbox{1.35cm}{\centering \vspace{0.1cm} 
$1,2$\\ $1,\tau,\overline{\tau}$\\ 
\vspace{0.05cm}} & \parbox{2.25cm}{\centering \vspace{0.1cm}
$7/9$ \\ $(13-\sqrt{57})/8$\\
\vspace{0.05cm}}\\ \hline
20 & \parbox{1.35cm}{\centering \vspace{0.1cm} 
$1,2$\\ $1,2,1+\tau$\\ 
\vspace{0.05cm}} & \parbox{2.25cm}{\centering \vspace{0.1cm}
$(28-2\sqrt{115})/9$ \\ $(25-\sqrt{355})/9$\\
\vspace{0.05cm}}\\ \hline
\end{tabular}
\end{table}
\end{minipage}\begin{minipage}{0.54\textwidth}
\renewcommand{\arraystretch}{2.027}
\begin{table}[H]
\raggedleft
\begin{tabular}{|c|c|c|}
\hline
23 & \parbox{1.4cm}{\centering \vspace{0.1cm} 
$1,2$\\ $1,2,\tau,\overline{\tau}$\\ 
\vspace{0.05cm}} & \parbox{2.95cm}{\centering \vspace{0.1cm}
$8/9$ \\ $(31-\sqrt{161})/25$\\
\vspace{0.05cm}}\\ \hline
24 & \parbox{1.4cm}{\centering \vspace{0.1cm} 
$1,2$\\ $1,2,1+\tau$\\ 
\vspace{0.05cm}} & \parbox{2.95cm}{\centering \vspace{0.1cm}
$(11-6\sqrt{2})/3$ \\ $(10-\sqrt{58})/3$\\
\vspace{0.05cm}}\\ \hline
31 & \parbox{1.65cm}{\centering \vspace{0.1cm} 
$1,2,3$\\ $1,2,3,1+\tau$\\ 
\vspace{0.05cm}} & \parbox{2.95cm}{\centering \vspace{0.1cm}
$(191-3\sqrt{1209})/128$ \\ $20/31$\\
\vspace{0.05cm}}\\ \hline
35 & \parbox{1.7cm}{\centering \vspace{0.1cm} 
$1,2,3$\\ $1,\tau,\overline{\tau},1+\tau$\\ 
\vspace{0.05cm}} & \parbox{2.95cm}{\centering \vspace{0.1cm} 
$(211-3\sqrt{1505})/128$\\ $(805-5\sqrt{25585})/8$\\ 
\vspace{0.05cm}}\\ \hline
39 & \parbox{1.65cm}{\centering \vspace{0.1cm} 
$1,2,3$\\ $1,2,3,1+\tau$\\ 
\vspace{0.05cm}} & \parbox{2.95cm}{\centering \vspace{0.1cm}
$(231-3\sqrt{1833})/128$ \\ $10/13$\\
\vspace{0.05cm}}\\ \hline
40 & \parbox{1.65cm}{\centering \vspace{0.1cm} 
$1,2,3$\\ $1,2,3,2+\tau$\\ 
\vspace{0.05cm}} & \parbox{2.95cm}{\centering \vspace{0.1cm}
$7/8$ \\ $(25-\sqrt{185})/16$\\
\vspace{0.05cm}}\\ \hline
43 & \parbox{1.65cm}{\centering \vspace{0.1cm} 
$1,2,3$\\ $1,2,3,1+\tau$\\ 
\vspace{0.05cm}} & \parbox{2.95cm}{\centering \vspace{0.1cm}
$(251-3\sqrt{2193})/128$ \\ $391/477$\\
\vspace{0.05cm}}\\ \hline
47 & \parbox{1.65cm}{\centering \vspace{0.1cm} 
$1,2,3$\\ $1,2,3,1+\tau$\\ 
\vspace{0.05cm}} & \parbox{2.95cm}{\centering \vspace{0.1cm}
$(271-3\sqrt{2585})/128$ \\ $42/47$\\
\vspace{0.05cm}}\\ \hline
\end{tabular}\hspace{0.36cm}
\end{table}
\end{minipage}
\captionsetup{width=.94\linewidth}
\caption{Some small (measured by $\mu$) admissible sets with their minimal $\eps^2$.}\label{table:2}
\end{table}

The C\texttt{++} source code that produced Table \ref{table:2} is posted on the author's \href{https://www.math.ucdavis.edu/~dmartin}{website}. The algorithm takes as input a discriminant and a finite set of integers. It returns all admissible subsets of the input alongside their minimal $\eps$-values. In short, the algorithm works by enumerating ideals $\fb$ from Definition \ref{def:admit} and computing the smallest $\eps$ for which $\cup D(a/b,\eps/|b|)=\bC$. The covering property is checked by exploiting periodicity of the union modulo the fractional ideal $\fb^{-2}$, and verifying that intersections of the boundaries of two discs are contained in a third disc.

The admissible sets above which are contained in $\bZ$ have already been found by Theorem \ref{thm:ints}. For $\Delta=-4$, $-8$, $-15$, $-19$, $-23$, and $-40$, the values of $\eps$ given in (\ref{eq:106}) are optimal, matching those of Table \ref{table:2}. This does not always happen, and Figure \ref{fig:6} gives one such example. Both images show the same arrangement of discs, coming from the first stage of Algorithm \ref{alg:1} using $B=\{1,2,3\}$ in $\bQ(\sqrt{-47})$. There are discs of radius $1/b$ along every multiple of the line $\Im(z)=\sqrt{47}/2b$ for $b\in\{1,2,3\}$. Up to scaling, shifting, and reflecting, there are two other disc arrangements also produced by $B=\{1,2,3\}$. The three colors in Figure \ref{fig:6} distinguish among which arrangement would appear in stage 2 (similar to colors in Figures \ref{fig:3} and \ref{fig:4}).

The first image in Figure \ref{fig:6} shows the partition used by Algorithm \ref{alg:2} to determine $a_1$ and $b_1$ based on the location of $z$. Here radii can be scaled by $\eps=\sqrt{63}/8$ before disc boundaries and corresponding partition boundaries touch. This agrees with Theorem \ref{thm:ints}'s value of $\eps$ in (\ref{eq:106}). The second image shows the partition that minimizes $|b_1z-a_1|$. Now discs can be scaled by the smaller $$\eps=\sqrt{\frac{271-3\sqrt{2585}}{128}},$$ as Table \ref{table:2} asserts.

\begin{figure}
\centering
\includegraphics[trim=3.75cm 3.0cm 3.7cm 1.2cm,clip,height=5.8cm]{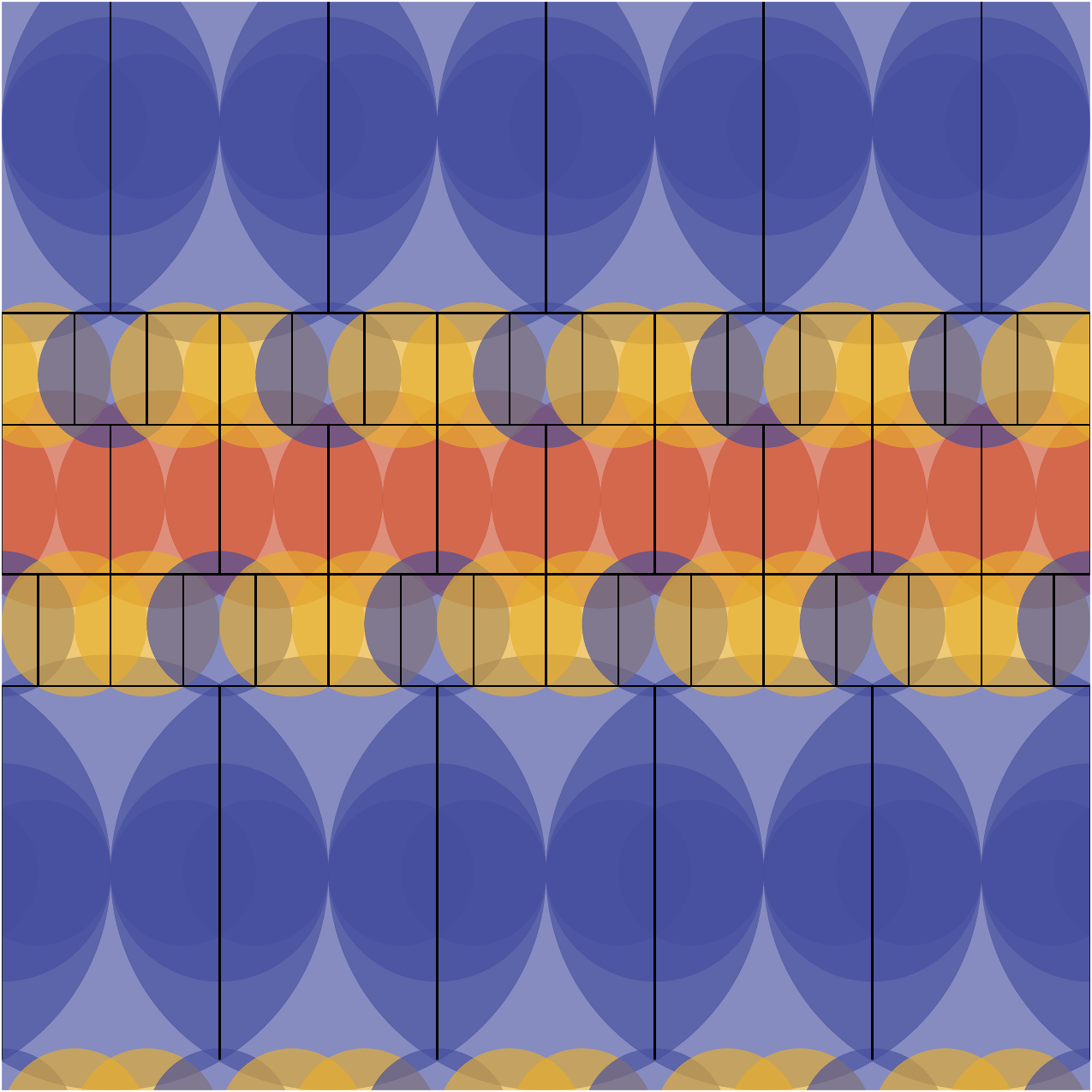}\hspace{0.5cm}
\includegraphics[trim=3.75cm 3.0cm 3.7cm 1.2cm,clip,height=5.8cm]{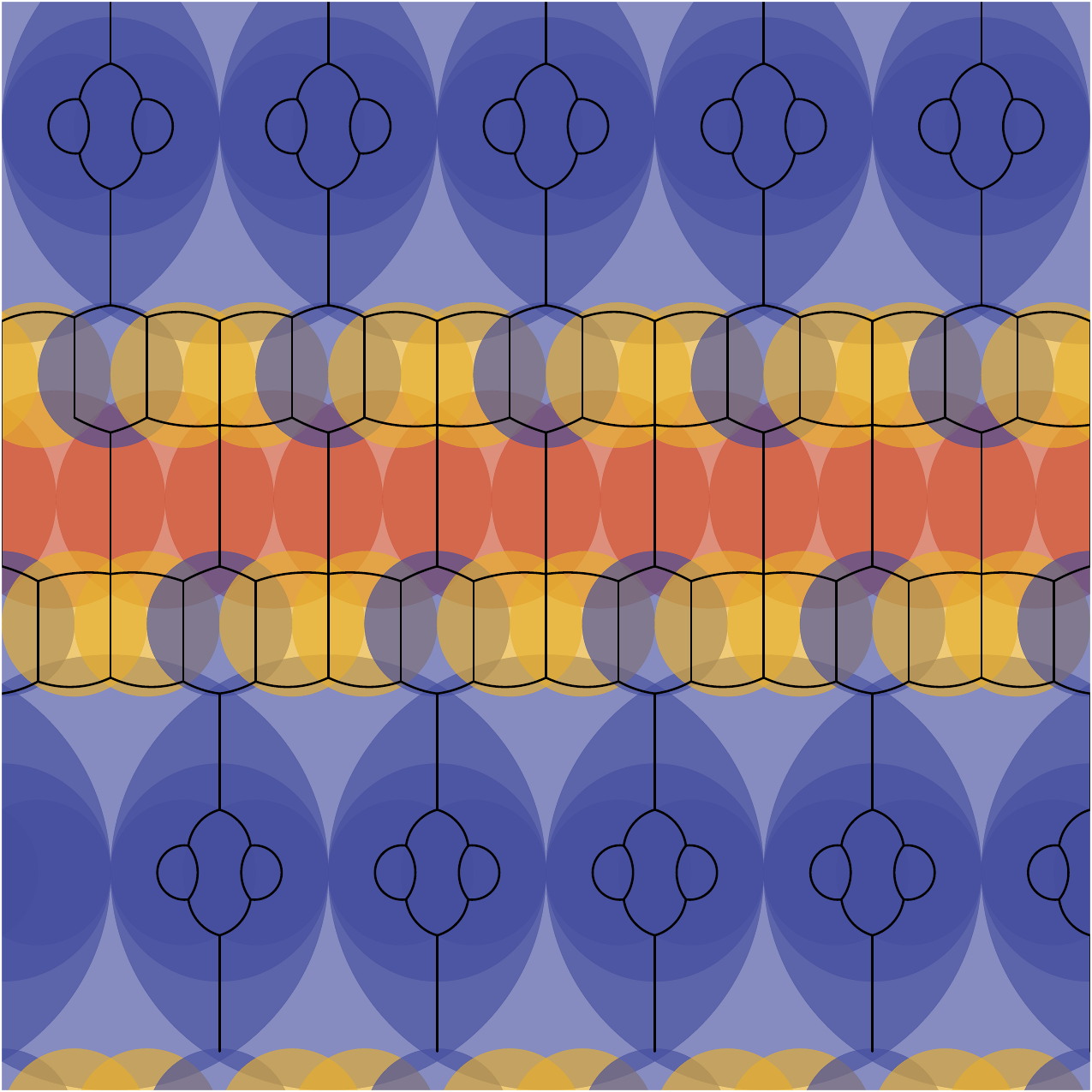}
\caption{In $\bQ(\sqrt{-47})$ using $B=\{1,2,3\}$, partitions resulting from Algorithm \ref{alg:2} (left) and minimizing $|bz-a|$ (right).}\label{fig:6}
\end{figure}

As an aside, such a partition associates to every $z\in\bC$ a disc center $a/b$. We can then ask whether a probability measure exists on $D(0,\eps)$ for which $z\mapsto (b/z-a)/b'$ is invariant and ergodic. For Hurwitz' algorithm, an invariant measure is shown to exist for $\bQ(\sqrt{-3})$ in \cite{shiokawa2}, and Nakada does the same for $\bQ(\sqrt{-1})$ in \cite{nakada}. Shiokawa also proves ergodicity results in \cite{shiokawa} for $\bQ(\sqrt{-3})$. The goal of such an investigation is to attack statistical questions, like the distribution of coefficients or the expected value of $|q_n(q_nz-p_n)|$ for $z$ uniformly distributed in $D(0,\eps)$.

\bibliographystyle{plain}
\bibliography{refs}

\end{document}